\newtheorem{theorem}{Theorem}[section]
\newtheorem{lemma}[theorem]{Lemma}
\newtheorem{fact}[theorem]{Fact}
\theoremstyle{definition}
\theoremstyle{assumption}
\newtheorem{assumption}[theorem]{Assumption}
\theoremstyle{claim}
\theoremstyle{remark}
\newtheorem*{note}{Note}
\numberwithin{equation}{section}
\begin{document}

\title{$\partial$-reducible handle additions}

%    Information for first author
\author{Han Lou}
%    Address of record for the research reported here
\address{School of Mathematical Sciences, Dalian University of Technology, Dalian, People's Republic of China, 116024}
%    Current address
\email{lh0720@mail.dlut.edu.cn}
%
%    Information for second author
\author{Mingxing Zhang}
\address{School of Mathematical Sciences, Dalian University of Technology, Dalian, People's Republic of China, 116024}
\email{zhangmx@dlut.edu.cn}

%    General info

\subjclass[2010]{Primary 57M50}

\keywords{handle additions, degenerating slopes, $\partial$-reducible handle additions}

\begin{abstract}
Let $M$ be a simple 3-manifold, and $F$ be a component of $\partial M$ of genus at least 2. Let $\alpha$ and $\beta$ be separating slopes on $F$. Let $M(\alpha)$ (resp. $M(\beta)$) be the manifold obtained by adding a 2-handle along $\alpha$ (resp. $\beta$). If $M(\alpha)$ and $M(\beta)$ are $\partial$-reducible, then the minimal geometric intersection number of $\alpha$ and $\beta$ is at most 8.
\end{abstract}

\maketitle

%% The correct journal style for \specialsection is all uppercase; a known bug
%% in amsart.cls prevents this, so input must be uppercase until it is fixed.
%
%%%%%%%%%%%%%%%%%%%%%%%%%%%%%%%%%%%%%%%%%%%%%%%%%%%%%%%%%%%%%%%%%%%%%%%%

%%%%%%%%%%%%%%%%%%%%%%%%%%%%%%%%%%%%%%%%%%%%%%%%%%%%%%%%%%%%%%%%%%%%%%%%

\section{Introduction}
Let $M$ be a compact 3-manifold. $M$ is \textit{irreducible} if each 2-sphere in $M$ bounds a 3-ball. $M$ is \textit{$\partial$-irreducible} if $\partial M$ is incompressible. $M$ is \textit{anannular} if $M$ does not contain essential annuli. $M$ is \textit{atoroidal} if $M$ does not contain essential tori.
A compact, orientable 3-manifold $M$ is said to be \textit{simple} if it is irreducible, $\partial$-irreducible, anannular and atoroidal. By Thurston's Geometrization Theorem, a Haken manifold $M$ is hyperbolic if and only if $M$ is simple.

Let $M$ be a compact 3-manifold, and $F$ be a component of $\partial M$. A \textit{slope} $\alpha$ on $F$ is an isotopy class of essential simple closed curves on $F$. Denote by $\Delta(\alpha, \beta)$ the minimal geometric intersection number among all the curves representing the slopes $\alpha$ and $\beta$. For a slope $\alpha$ on $F$, denote by $M(\alpha)$ the manifold obtained by attaching a 2-handle to $M$ along a regular neighborhood of $\alpha$ on $F$, then capping off a possible 2-sphere component of the resulting manifold by a 3-ball. Note that if $F$ is a torus, then $M(\alpha)$ is the Dehn filling along $\alpha$. Given a simple 3-manifold $M$, a slope $\alpha$ is \textit{degenerating} if $M(\alpha)$ is non-simple.

It is known that there are finitely many degenerating Dehn fillings. This result is precisely stated on the upper bound of $\Delta(\alpha, \beta)$ for two degenerating slopes $\alpha$ and $\beta$. There are 10 cases as follows. C. Gordon and J. Luecke showed that $\Delta(\alpha, \beta)\le 1$ if $M(\alpha)$ and $M(\beta)$ are reducible \cite{GL1}. M. Scharlemann showed that $\Delta(\alpha, \beta)=0$ if $M(\alpha)$ is reducible and $M(\beta)$ is $\partial$-reducible \cite{Sch1}. R. Qiu and Y. Wu independently showed that $\Delta(\alpha, \beta)\le 2$ if $M(\alpha)$ is reducible and $M(\beta)$ is annular \cite{Qiu1}, \cite{Wu2}.
Y. Wu and S. Oh independently showed that $\Delta(\alpha, \beta)\le 3$ if $M(\alpha)$ is reducible and $M(\beta)$ is toroidal \cite{Wu1}, \cite{Oh}.
Y. Wu showed that $\Delta(\alpha, \beta)\le 1$ if $M(\alpha)$ and $M(\beta)$ are  $\partial$-reducible \cite{Ying2}. C. Gordon and Y. Wu showed that $\Delta(\alpha, \beta)\le 2$ if $M(\alpha)$ is $\partial$-reducible and $M(\beta)$ is annular \cite{Gordon}. C. Gordon and J. Luecke showed that $\Delta(\alpha, \beta)\le 2$ if $M(\alpha)$ is $\partial$-reducible and $M(\beta)$ is toroidal \cite{GL2}. C. Gordon showed that $\Delta(\alpha, \beta)\le 5$ if $M(\alpha)$ and $M(\beta)$ are annular, $\Delta(\alpha, \beta)\le 5$ if $M(\alpha)$ is annular and $M(\beta)$ is toroidal and  $\Delta(\alpha, \beta)\le 8$ if $M(\alpha)$ and $M(\beta)$ are toroidal \cite{Gordon3}.

Suppose that the genus of $F$ is at least two. M. Scharlemann and Y. Wu gave an example that has infinitely many degenerating slopes \cite{Wu}. Let $\alpha$ be a degenerating slope on $F$. If either $\alpha$ is separating or there are no separating degenerating curves coplanar with $\alpha$, $\alpha$ is a \textit{basic} degenerating slope. M. Scharlemann and Y. Wu proved that there are finitely many basic degenerating slopes. Furthermore, $\Delta(\alpha, \beta)$ for two separating degenerating slopes $\alpha$ and $\beta$ has an upper bound. Specially, $\Delta(\alpha,\beta)=0$ if $M(\alpha)$ is reducible and $M(\beta)$ is $\partial$-reducible \cite{Wu}. R. Qiu and M. Zhang showed that $\Delta(\alpha, \beta)\le 2$ if $M(\alpha)$ and $M(\beta)$ are both reducible \cite{Qiu}.

The case that $M(\alpha)$ and $M(\beta)$ are both $\partial$-reducible was also considered by Y. Li, R. Qiu and M. Zhang. They showed that $\Delta(\alpha, \beta)\le 2$ when the $\partial$-reducing disks have boundaries on $\partial M-F$ \cite{Xing2}. In this paper, we consider the case with no limits on the $\partial$-reducing disks.

\begin{theorem}
\label{main}
Suppose that $M$ is a simple 3-manifold and $F$ is a component of $\partial M$ of genus at least 2. Let $\alpha$ and $\beta$ be separating slopes on $F$ such that $M(\alpha)$ and $M(\beta)$ are $\partial$-reducible. Then $\Delta(\alpha, \beta)\le 8$.
\end{theorem}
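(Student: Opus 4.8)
The plan is to prove Theorem~\ref{main} by contradiction, assuming $\Delta := \Delta(\alpha,\beta)\ge 9$ and producing a configuration of surfaces in $M$ that is incompatible with $M$ being simple. First, by \cite{Wu} we may assume $M(\alpha)$ and $M(\beta)$ are irreducible, since if either is reducible then $\Delta=0$. Let $H_\alpha,H_\beta$ be the $2$-handles added along $\alpha,\beta$, and let $D_\alpha\subset M(\alpha)$, $D_\beta\subset M(\beta)$ be $\partial$-reducing disks. After a standard isotopy we may take $D_\alpha$ to meet $H_\alpha$ in a nonempty collection of meridian disks, so that $Q:=D_\alpha\cap M$ is a properly embedded planar surface in $M$ whose boundary consists of $q\ge 1$ copies of $\alpha$ on $F$ together with one further circle $\partial_0 Q$ lying either on $\partial M-F$ or on the surgered copy of $F$; symmetrically obtain $P:=D_\beta\cap M$ with $p\ge 1$ copies of $\beta$ and one circle $\partial_0 P$ ($q,p\ge1$ because $M$ is $\partial$-irreducible). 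Choose $D_\alpha,D_\beta$ so that, in order, $q$, then $p$, then $\abs{P\cap Q}$ is minimal. Since $M$ is simple, $P$ and $Q$ are incompressible and $\partial$-incompressible, and minimality of $\abs{P\cap Q}$ lets us assume $P\cap Q$ has no circle bounding a disk in $P$ or $Q$ and no $\partial$-parallel arc; discarding inessential circles, $P\cap Q$ becomes a union of arcs, with endpoints at the $\alpha$–$\beta$ intersection points on $F$ (plus, in the subcases where $\partial_0 Q$ or $\partial_0 P$ lies on $F$, arcs with an endpoint on such a curve).

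Next I form the two labelled intersection graphs. Capping the $\alpha$-circles of $Q$ by disks turns $\widehat Q$ into a disk carrying a graph $\Gamma_Q$ whose $q$ fat vertices are these disks and whose edges are the arcs of $P\cap Q$; numbering the $\beta$-circles $1,\dots,p$ around $P$, at each fat vertex of $\Gamma_Q$ the labels $1,\dots,p$ appear in cyclic order, each exactly $\Delta$ times, so each fat vertex receives at least $p\Delta$ edge-endpoints. Symmetrically one gets $\Gamma_P$ on a disk $\widehat P$ with $p$ fat vertices each of valence at least $q\Delta$. The two graphs share the edge set $P\cap Q$ and obey the Parity Rule. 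Minimality of $q$ (resp.\ $p$) rules out trivial loops in $\Gamma_Q$ (resp.\ $\Gamma_P$): such a loop bounds a disk face which, together with a subdisk of a fat vertex, is a compressing or $\partial$-compressing disk, or can be used to lower $q$ (resp.\ $p$).

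The structural heart of the argument is the analysis of Scharlemann cycles. A Scharlemann cycle of length $\ell$ in $\Gamma_Q$ bounds a disk face whose boundary, together with the relevant part of $P$ and the $2$-handle $H_\beta$, produces a surface in $M$ that would force $M(\beta)$ to be reducible, or to contain a lens-space or Seifert-fibered summand, or (for small $\ell$) a reducing or $\partial$-reducing sphere; since $M$ is simple and $M(\beta)$ is irreducible and only $\partial$-reducible, this excludes all but a short explicit list of Scharlemann cycles, which are then eliminated directly, and the symmetric statement holds for $\Gamma_P$. Combining this with the standard Gordon–Luecke machinery bounds the number of mutually parallel edges in a parallelism class of $\Gamma_Q$ (resp.\ $\Gamma_P$) by a constant of order $p$ (resp.\ $q$), and bounds the length of any ``great cycle''.

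Finally, passing to the reduced graphs $\overline{\Gamma}_Q,\overline{\Gamma}_P$ (one edge per parallelism class) and computing Euler characteristics on the disks $\widehat Q,\widehat P$, I use $\chi(\widehat Q)=\chi(\widehat P)=1$ together with the valence estimates and the parallel-edge bounds: when $\Delta\ge 9$ the count forces one reduced graph to have a vertex almost all of whose incident edge-ends carry the same label pair, hence a long Scharlemann cycle or great cycle, which the previous step has excluded; feeding this back into the other graph gives the contradiction. The main obstacle is exactly this last combinatorial optimization, and it is made delicate by the several configurations for $\partial_0 Q$ and $\partial_0 P$ — both on $\partial M-F$, one or both on the surgered $F$, and in the latter case whether the curve is parallel to a copy of $\alpha$ or $\beta$ — since each configuration alters the boundary contribution to $\chi$ and the list of admissible Scharlemann cycles, so that the bound $\Delta\le 8$ must be verified uniformly across all of them, the tightest being the case parallel to Gordon's two-toroidal-fillings estimate \cite{Gordon3}. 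The degenerate possibilities (some $D$ isotopic into $M$, or $\partial_0 Q$ inessential) are disposed of separately and yield a strictly smaller bound.
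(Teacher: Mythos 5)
Your overall setup (the planar surfaces $P,Q$ from the two $\partial$-reducing disks, minimality of intersection with the handles, the labelled intersection graphs on the disks $\widehat P,\widehat Q$, and the parity rule) matches the paper. But the proposal breaks down at exactly the point the paper spends all of Section 3 on, and the step you delegate to ``standard Gordon--Luecke machinery'' does not apply as stated. First, your description of the labelling is wrong for this setting: because $\alpha$ and $\beta$ are \emph{separating} curves on a genus $\ge 2$ surface and because $\partial\widehat P,\partial\widehat Q$ contribute boundary labels $*$, the labels around a vertex are not ``$1,\dots,p$ in cyclic order, each exactly $\Delta$ times''; they occur in the reflected pattern $-q,\dots,-1,*,\dots,*,+1,\dots,+q$ (the paper's Assumption \ref{assump}), and all of the paper's key lemmas (the parity rule, the analysis of $2$-sided faces of $B_Q^{v}$, Lemmas \ref{1}--\ref{good}) depend on this pattern. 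Consequently the torus-filling counting you invoke (``a vertex almost all of whose edge-ends carry the same label pair, hence a long Scharlemann cycle or great cycle'') has no direct analogue here; indeed in this setting there are no Scharlemann cycles at all (Lemma \ref{compress}), and the presence of $*$-labels destroys the great-cycle bookkeeping.

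Second, and more decisively, your final Euler-characteristic count is not strong enough to reach $8$ without the paper's good/bad-vertex analysis. On a disk, a reduced graph can have every interior vertex of valency up to $5$ or $6$ unless one knows that every vertex meets a boundary edge; combining valency $\le 6$ with the bound of fewer than $3q$ mutually parallel edges (Lemma \ref{3q}) only rules out $\Delta$ of size roughly $18$, nowhere near $9$. The paper gets valency $\le 3$ by the Culler--Gordon--Luecke lemma (Lemma \ref{valency}), which requires every vertex to belong to a boundary edge; establishing that (i.e., that every interior vertex is ``good'' once $\Delta\ge 10$, via the $2$-sided disk faces of $B_Q^{+v}$, virtual Scharlemann cycles and Lemmas \ref{1}, \ref{4edges}, \ref{parallel}, \ref{good}) is the real content of the proof, and your proposal contains no substitute for it. A smaller point: incompressibility and $\partial$-incompressibility of $P$ and $Q$ do not follow from $M$ being simple alone; they come from the minimality of $\lvert\widehat P\cap H_\alpha\rvert$, $\lvert\widehat Q\cap H_\beta\rvert$ together with irreducibility of $M(\alpha)$, $M(\beta)$, as in Lemma \ref{incompressible}.
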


\section{Preliminaries}

Suppose that $M$ is a simple 3-manifold, and $F$ is a component of $\partial M$ of genus at least two. Let $\alpha$ and $\beta$ be two separating slopes on $F$ such that $M(\alpha)$ and $M(\beta)$ are both $\partial$-reducible. Note that if one of $M(\alpha)$ and $M(\beta)$ is reducible, then $\Delta(\alpha, \beta)=0$. Thus we can assume that $M(\alpha)$ and $M(\beta)$ are irreducible. We denote by $H_\alpha$ (resp. $H_\beta$) the 2-handle attached to $M$ along $\alpha$ (resp. $\beta$) to obtain $M(\alpha)$ (resp. $M(\beta)$). Let $\hat P$ (resp. $\hat Q$) be an essential disk in $M(\alpha)$ (resp. $M(\beta)$) such that $\lvert\hat P\cap H_{\alpha}\rvert$ (resp. $\lvert\hat Q\cap H_{\beta}\rvert$) is minimal among all essential disks in $M(\alpha)$ (resp. $M(\beta)$). Let $P=\hat P\cap M$ and $Q=\hat Q\cap M$. Obviously,
$\hat P\cap H_{\alpha}$ (resp. $\hat Q\cap H_{\beta}$) consists of disks with boundary components having slope $\alpha$ (resp. $\beta$).

\begin{lemma}
\label{incompressible}
$P$ (resp. $Q$) is incompressible and $\partial$-incompressible in $M$.
\end{lemma}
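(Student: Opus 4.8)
The plan is to argue by contradiction, pitting a compressing or $\partial$-compressing disk for $P$ against the minimality of $n:=|\hat P\cap H_\alpha|$. After an isotopy we may take $\hat P$ to meet $H_\alpha$ in $n$ meridian disks, so that $P$ is a planar surface whose boundary consists of $n$ copies $\alpha_1,\dots,\alpha_n$ of the slope $\alpha$ on $F$ together with one further circle $\partial_0 P=\partial\hat P$ lying on $\partial M$; note that $\partial_0 P$ is essential in $\partial M$, for otherwise $\partial\hat P$ would be inessential in $\partial M(\alpha)$, and in fact $n\ge 1$, since otherwise $\hat P$ would already be an essential disk in $M$, contradicting $\partial$-irreducibility. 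I will repeatedly use that a properly embedded disk in $M(\alpha)$ with boundary essential in $\partial M(\alpha)$ is an essential disk, so that any disk obtained from $\hat P$ by cut-and-paste along $E$ that has fewer meridian disks and still has essential boundary contradicts the choice of $\hat P$.

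For incompressibility, let $E\subset M$ be a compressing disk for $P$, so $\partial E$ is essential in the planar surface $P$. Since $\hat P$ is a disk, $\partial E$ bounds a subdisk $D'\subset\hat P$. If $D'\cap H_\alpha=\emptyset$, then $D'\subset\hat P\cap M=P$, so $\partial E$ bounds a disk in $P$, a contradiction; hence $D'$ contains at least one meridian disk. Then $\hat P'=(\hat P\setminus D')\cup E$ is a properly embedded disk with $\partial\hat P'=\partial\hat P$ essential in $\partial M(\alpha)$ and $|\hat P'\cap H_\alpha|<n$ — it is embedded because $\mathrm{int}(E)$ misses $P$ and misses $H_\alpha$ (as $E\subset M$ and $M\cap H_\alpha\subset\partial M$), hence misses $\hat P\setminus D'$. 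This contradicts minimality, so $P$ is incompressible.

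For $\partial$-incompressibility, let $E\subset M$ be a $\partial$-compressing disk for $P$, with $\partial E=a\cup b$, $a$ an essential arc in $P$ and $b\subset\partial M$. The crucial preliminary is to reduce to the case $b\subset\partial M(\alpha)$, equivalently (after pushing $b$ off the attaching annulus $N(\alpha)$ as far as possible) to the case that both endpoints of $a$ lie on $\partial_0 P$ rather than on some $\alpha_i$. If an endpoint of $a$ lies on $\alpha_i=\partial D_i$, where $D_i$ is the corresponding meridian disk of $H_\alpha$, I would extend $E$ across $H_\alpha$ along $D_i$: this yields either a genuine compressing disk for $P$, returning us to the previous paragraph, or, after an isotopy pushing the offending endpoint onto a lid of $H_\alpha$, a $\partial$-compressing disk with one fewer endpoint of $a$ on the $\alpha$-curves; iterating drives us to $b\subset\partial M(\alpha)$. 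In that case $E$ $\partial$-compresses $\hat P$ inside $M(\alpha)$; since $\hat P$ is a disk, $a$ is $\partial$-parallel in $\hat P$ and cuts it into subdisks $D',D''$ with $|D'\cap H_\alpha|+|D''\cap H_\alpha|=n$, and the $\partial$-surgery of $\hat P$ along $E$ is the pair of disks $D'\cup E$, $D''\cup E$. If $|D'\cap H_\alpha|=0$ then $D'\subset P$, so $a$ is $\partial$-parallel in $P$, a contradiction; hence both disks have fewer than $n$ meridian disks. Finally $\partial\hat P$ is the band sum of $\partial(D'\cup E)$ and $\partial(D''\cup E)$ along $b$ and is essential in $\partial M(\alpha)$, so at least one of these two boundary circles is essential in $\partial M(\alpha)$; the corresponding disk contradicts minimality. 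Thus $P$ is $\partial$-incompressible, and the identical argument with $\hat Q$, $Q$, $H_\beta$ in place of $\hat P$, $P$, $H_\alpha$ handles $Q$.

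The step I expect to be the main obstacle is the reduction to $b\subset\partial M(\alpha)$: a $\partial$-compressing disk for $P$ can genuinely have its arc $b$ running across the attaching annulus $N(\alpha)$ and ending on the curves $\alpha_i$, so $E$ is not a priori a disk with boundary in $\partial M(\alpha)$. Making the ``extend $E$ across $H_\alpha$'' move precise — checking that the new disk is properly embedded, that its boundary behaves as claimed, and that the iteration terminates — is the only nonroutine point; the rest is the standard innermost-disk / outermost-arc bookkeeping.
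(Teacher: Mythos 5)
Your incompressibility argument is fine (the subdisk swap is essentially the paper's move, and even avoids invoking irreducibility of $M(\alpha)$), and your treatment of a $\partial$-compressing disk whose arc $a$ has both endpoints on $\partial\hat P$ is also correct: it is the paper's Case 3 in different packaging (split $\hat P$ along $a$, attach $E$ to both pieces, and note that at least one of the two new boundary circles is essential because $\partial\hat P$ is their band sum along $b$). The gap is exactly where you flagged it: the reduction to $b\subset\partial M(\alpha)$. A $\partial$-compressing disk for $P$ can have one or both endpoints of $a$ on the curves $\alpha_i$, and these configurations (the paper's Cases 1, 2 and 4) are the bulk of the lemma. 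Your proposal disposes of them only via the move ``extend $E$ across $H_\alpha$ along $D_i$, pushing the offending endpoint onto a lid,'' and this move does not make sense as stated: pushing $E$ through the handle toward a lid in general forces it to cross the other meridian disks of $\hat P\cap H_\alpha$ lying between $D_i$ and that lid, so its interior is no longer disjoint from $\hat P$; and a point on a lid lies on neither $\partial P$ nor $\partial\hat P$ (which has been isotoped off $H_\alpha$), so the object produced is neither a compressing disk nor a $\partial$-compressing disk for $P$ or for $\hat P$. No complexity is exhibited that decreases, so the claimed iteration has no content, and since the endpoint configuration of $a$ on $\partial P$ cannot be changed by isotopy, some genuine new construction is required.

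In fact no reduction is needed, and this is how the paper argues: perform the $\partial$-compression of $P$ along $E$ inside $M$. The result is one or two planar surfaces whose boundary circles are $\partial\hat P$, copies of $\alpha$, and one circle obtained by banding two of the $\alpha_i$, two subarcs of a single $\alpha_i$, or an $\alpha_i$ with $\partial\hat P$, along $b$. Capping the $\alpha$-slope circles with disks in $H_\alpha$ turns the appropriate piece into a properly embedded disk in $M(\alpha)$ meeting $H_\alpha$ strictly fewer times; in the mixed case (your ``one endpoint on $\alpha_i$, one on $\partial_0P$'') its boundary is isotopic to $\partial\hat P$ in $\partial M(\alpha)$ because $\alpha$ bounds a disk there, and in the remaining cases one checks the new boundary circle is essential, contradicting minimality of $\lvert\hat P\cap H_\alpha\rvert$. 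To complete your proof you must either supply a precise substitute for the ``extend across the handle'' move or, more simply, treat the endpoint-on-$\alpha_i$ configurations by this direct compress-and-cap argument.
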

\begin{proof}
Suppose that $P$ is compressible in $M$. Let $D$ be a compressing disk for $P$ in $M$. Then $\partial D$ bounds a disk $D^\prime$ in $\hat P$. $D\cup D^\prime$ is a 2-sphere which bounds a 3-ball in $M(\alpha)$ since $M(\alpha)$ is irreducible. By isotoping $D^\prime$ to $D$, we reduce $\lvert\hat P\cap H_{\alpha}\rvert$, contradicting that $\lvert\hat P\cap H_{\alpha}\rvert$ is minimal. Thus $P$ is incompressible in $M$.

Suppose that $P$ is $\partial$-compressible. Let $D$ be a $\partial$-compressing disk for $P$ in $M$. $\partial D=u\cup v$, $u\subset F$ and $v\subset P$.

Case 1: $v$ connects two components of $\partial P-\partial\hat P$.

After $\partial$-compressing $P$ along $D$, we can get an essential disk $\hat P_1$ in $M(\alpha)$.

Case 2: $\partial v$ is in a component of $\partial P-\partial\hat P$.

By $\partial$-compressing $P$ along $D$, we get two planar surface $P_1$ and $P_2$. Suppose that $\partial\hat P\subset\partial P_2$. Then we get an essential disk $\hat P_1$ by capping off the components of $\partial P_1$ having slope $\alpha$.

Case 3: $\partial v$ is in $\partial\hat P$.

Let $P_1$ and $P_2$ be planar surfaces obtained by $\partial$-compressing $P$ along $D$. After capping off the components of $\partial P_1$ having slope $\alpha$, we get an essential disk $\hat P_1$.

Case 4: $v$ connects $\partial\hat P$ and one component of $\partial P-\partial\hat P$.

Let $P_1$ be the planar surface obtained by $\partial$-compressing $P$ along $D$. After capping off the components of $\partial P_1$ having slope $\alpha$, we get a disk $\hat P_1$ and $\partial\hat P_1=\partial\hat P$ since $\alpha$ is a trivial slope in $\partial M(\alpha)$. Then $\hat P_1$ is an essential disk in $M(\alpha)$.

In each case we have  $\lvert\hat P_1\cap H_{\alpha}\rvert<\lvert\hat P\cap H_{\alpha}\rvert$, contradicting that $\lvert\hat P\cap H_\alpha\rvert$ is minimal.

Thus $P$ is $\partial$-incompressible in $M$.
\end{proof}

We may assume that $\lvert P\cap Q\rvert$ is minimal. Then each component of $P\cap Q$ is either an essential arc or an essential simple closed curve on both $P$ and $Q$.

R. Litherland and C. Gordon first used the results in graph theory to study Dehn fillings. See \cite{Litherland2}, \cite{Litherland}. Then M. Scharlemann and Y. Wu introduced these methods to study handle additions. See \cite{Wu}, \cite{Sch2}.

Let $\Gamma_P$ be a graph on $\hat P$. The vertices of $\Gamma_P$ are components of $\partial P$ and the edges of $\Gamma_P$ are arc components of $P\cap Q$. Each component of $\partial P-\partial\hat P$ is called an \textit{interior vertex} of $\Gamma_P$. An edge incident to $\partial\hat P$ is called a \textit{boundary edge} of $\Gamma_P$. Similarly, we can define $\Gamma_Q$ in the disk $\hat Q$. Note that by taking $\partial\hat P$ (resp. $\partial\hat Q$) also as a vertex $\Gamma_P$ (resp. $\Gamma_Q$) is a spherical graph.

By Lemma \ref{incompressible}, we have the following Lemma \ref{1-sided}.

\begin{lemma}
\label{1-sided}
There are no 1-sided disk faces in both $\Gamma_P$ and $\Gamma_Q$.
\end{lemma}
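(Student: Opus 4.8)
The plan is to argue by contradiction and to show that a $1$-sided disk face in one graph is precisely the trace of an inessential arc in the other surface, which $\partial$-incompressibility (Lemma \ref{incompressible}) forbids. Suppose first that $\Gamma_P$ has a $1$-sided disk face $\Delta$. Since by construction the faces of $\Gamma_P$ avoid the meridian disks $\hat P\cap H_\alpha$ (these are contained in the fat vertices), $\Delta$ is a disk lying in $P\subset M$, and its boundary consists of a single edge $e$ together with one corner arc $a$ running along a fat vertex. Here $e$ is an arc component of $P\cap Q$, so $e\subset Q$, while $a$ is an arc in $\partial P$. The first point I would record is that $a\subset\partial M$ in every case: if the fat vertex is an interior vertex, then $a$ lies on the corresponding copy of $\alpha$ on $F\subset\partial M$; and if the vertex is $\partial\hat P$, then $a\subset\partial\hat P\subset\partial M$, since $\partial\hat P$ is disjoint from the attaching region of $H_\alpha$.

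Next I would arrange that the interior of $\Delta$ is disjoint from $Q$, so that $\Delta$ is an honest $\partial$-compressing disk. The only possible intersections of the interior of $\Delta$ with $Q$ are closed curves of $P\cap Q$, since no edge of $\Gamma_P$ lies in the interior of a face. If such curves occurred, an innermost one would bound a disk $D\subset\Delta\subset P$ whose interior misses $Q$; but after minimizing $|P\cap Q|$ every closed component of $P\cap Q$ is essential on $Q$, so $D$ would be a compressing disk for $Q$, contradicting that $Q$ is incompressible by Lemma \ref{incompressible}. Hence no such curves occur and the interior of $\Delta$ is disjoint from $Q$.

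It then follows that $\Delta$ is a disk in $M$ with $\partial\Delta=e\cup a$, where $e\subset Q$, $a\subset\partial M$, the interior of $\Delta$ meets neither $Q$ nor $\partial M$, and $e$ is an \emph{essential} arc of $Q$ (again because all arc components of $P\cap Q$ are essential on $Q$ after minimizing $|P\cap Q|$). Thus $\Delta$ is a $\partial$-compressing disk for $Q$, contradicting Lemma \ref{incompressible}. Therefore $\Gamma_P$ has no $1$-sided disk face, and the identical argument with the roles of $P$ and $Q$ interchanged (now using that $P$ is $\partial$-incompressible) rules out $1$-sided disk faces in $\Gamma_Q$.

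I do not expect a genuine obstacle here; the only points needing care are the two preliminary reductions, namely confirming that the corner arc $a$ always lies on $\partial M$ regardless of which type of vertex bounds the monogon, and verifying that the interior of $\Delta$ can be taken disjoint from $Q$. Both follow cleanly from Lemma \ref{incompressible} together with the minimality of $|P\cap Q|$, and the essential content is simply that a monogon in $\Gamma_P$ exhibits the arc $e$ as boundary-parallel in $Q$, which is exactly what $\partial$-incompressibility of $Q$ excludes.
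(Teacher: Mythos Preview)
Your argument is correct and spells out what the paper leaves as a one-line remark (the paper simply asserts that Lemma~\ref{1-sided} follows from Lemma~\ref{incompressible}, without further detail). The most direct reading of that implication is that a monogon in $\Gamma_P$ exhibits its edge $e$ as boundary-parallel in $P$, immediately contradicting the already-stated fact that every arc component of $P\cap Q$ is essential on both surfaces; your route---using the monogon as a $\partial$-compressing disk for $Q$ rather than an inessential arc in $P$---is an equally valid variant of the same idea.
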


\begin{lemma}
\label{paralleledge}
There are no edges that are parallel in both $\Gamma_P$ and $\Gamma_Q$.
\end{lemma}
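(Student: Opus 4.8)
The plan is to argue by contradiction: assuming some edges are parallel in both $\Gamma_P$ and $\Gamma_Q$, I will use the two bigons they bound to produce a properly embedded annulus in $M$, and then reach a contradiction either with the hypothesis that $M$ is anannular or with the minimality of $\abs{\hat P\cap H_\alpha}$, $\abs{\hat Q\cap H_\beta}$ and $\abs{P\cap Q}$.

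So suppose (after passing to an appropriate pair) that edges $e_1$ and $e_2$ cobound a bigon face $f_P$ of $\Gamma_P$ (a disk face with exactly two edges), with boundary $e_1\cup a_1\cup e_2\cup a_2$ where $a_1,a_2$ run along vertices of $\Gamma_P$, and likewise a bigon face $f_Q$ of $\Gamma_Q$ with boundary $e_1\cup b_1\cup e_2\cup b_2$. The interior of a face of $\Gamma_P$ is disjoint from every vertex, and no essential simple closed curve of $P\cap Q$ can lie in a disk, so $f_P\subset P\subset M$ and $f_Q\subset Q\subset M$; since the boundary arc of a face runs along a vertex between two consecutive edge-endpoints, $a_1,a_2$ meet $Q$ and $b_1,b_2$ meet $P$ only in their endpoints, so $f_P\cap f_Q=e_1\cup e_2$. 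Hence $N:=f_P\cup_{e_1\cup e_2}f_Q$ is a surface properly embedded in $M$ with $\partial N=a_1\cup b_1\cup a_2\cup b_2\subset\partial M$ and $\chi(N)=0$, so $N$ is an annulus or a M\"obius band. The M\"obius case cannot occur in the simple (hence orientable) manifold $M$: if $\partial N$ is essential in $\partial M$ then the frontier of a regular neighbourhood of $N$ is an essential annulus, and if $\partial N$ is inessential then capping $N$ off along a disk in $\partial M$ gives a closed nonorientable surface in $M$. Thus $N$ is an annulus.

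Next I would pin down $\partial N$ according to which vertices carry the four endpoints of $e_1,e_2$. If all four lie on interior vertices, then $a_1,a_2$ are arcs on copies of $\alpha$ and $b_1,b_2$ arcs on copies of $\beta$, all contained in $F$, so $\partial N\subset F$ and each component of $\partial N$ is the union of an $\alpha$-arc and a $\beta$-arc; otherwise some $a_i$ or $b_j$ lies on the boundary vertex $\partial\hat P$ or $\partial\hat Q$, so part of $\partial N$ runs along $\partial M-F$ or along $\partial\hat P,\partial\hat Q$, the discussion being analogous. Since $M$ is irreducible, $\partial$-irreducible and anannular, if $N$ is incompressible and not $\partial$-parallel then $N$ is essential, contradicting anannularity; so $N$ is compressible or $\partial$-parallel, and in either case one extracts a region $R$ of $M$ with $N\subset\partial R$ and $\partial R-N\subset\partial M$ (for a compressible annulus, compressing forces $\partial N$ to bound disks in $\partial M$ and $N$ together with these disks bounds a ball; for a $\partial$-parallel annulus, $R\cong N\times I$ with opposite lid $N'\subset\partial M$). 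Choosing $N$ innermost so that $R\cap(P\cup Q)=f_P\cup f_Q$, I would isotope $Q$ across $R$: either this deletes $e_1$ and $e_2$ from $P\cap Q$ and creates no new intersection, contradicting the minimality of $\abs{P\cap Q}$, or --- when $N'$ lies on the $F$-side of $N$ and sweeping $f_Q$ over $R$ carries it across the attaching region of $H_\beta$ (symmetrically, $f_P$ across $H_\alpha$) --- it lowers $\abs{\hat Q\cap H_\beta}$ (resp.\ $\abs{\hat P\cap H_\alpha}$), contradicting the minimal choice of $\hat Q$ (resp.\ $\hat P$).

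I expect this last step to be the main obstacle: one must first make $R$ clean by an innermost/outermost argument, and then check carefully --- tracking how $\partial N$ meets the copies of $\alpha$, the copies of $\beta$ and the curves $\partial\hat P,\partial\hat Q$ in $\partial M$, and handling the interior-vertex and boundary-vertex cases separately --- that the isotopy across $R$ really does strictly decrease one of the three minimal quantities.
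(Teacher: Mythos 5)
Your overall strategy is the right one, and it is essentially the argument behind the paper's ``proof,'' which is only a citation of Lemma 2.1 in Scharlemann--Wu: glue the two bigons into a properly embedded surface $N=f_P\cup f_Q$ with $\chi(N)=0$ and play the resulting annulus or M\"obius band off against the simplicity of $M$ and the minimality assumptions. Your first step is correct: a disk face of $\Gamma_P$ has interior disjoint from the edges by definition, and a circle of $P\cap Q$ inside it would be inessential, contradicting the minimality of $\abs{P\cap Q}$; hence $f_P\cap f_Q=e_1\cup e_2$ and $N$ is properly embedded.

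The decisive step, however, is not carried out, and the one device you offer for it does not work as stated. You ``choose $N$ innermost so that $R\cap(P\cup Q)=f_P\cup f_Q$,'' but $R$ is not something you get to choose: it is produced by the compressibility or $\partial$-parallelism of $N$, and there is no reason why, for some doubly parallel pair of edges, the resulting region should miss the rest of $P\cup Q$. The material of $P$ and $Q$ lying in $R$ need not itself contain a doubly parallel pair (it may consist of circles of $P\cap Q$, or arcs whose bigons are not doubly parallel), so an innermost argument over such pairs does not clean $R$; and without cleanliness you cannot sweep $f_Q$ across $R$ keeping $Q$ embedded, so the claimed strict decrease of $\abs{P\cap Q}$ or $\abs{\hat Q\cap H_\beta}$ --- which is the entire content of the lemma --- remains a plan, as you yourself acknowledge. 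There are also secondary loose ends: in the compressible case, ``$N$ together with these disks bounds a ball'' overlooks the possibility that the two disks in $\partial M$ are nested (then $N$ together with boundary material gives a torus rather than a sphere) and the question of which side the ball lies on; and in the M\"obius band case both assertions (that the frontier annulus is essential, and that a closed nonorientable surface is impossible) need the standard but nontrivial justifications using irreducibility, $\partial$-irreducibility and $\partial M\neq\emptyset$. So this is a correct opening move with the crucial intersection-reducing argument missing; completing it requires the careful treatment of $P\cap R$, $Q\cap R$ and of how the boundary arcs sit on $F$ and $\partial M-F$ that the cited Scharlemann--Wu lemma supplies.
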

\begin{proof}
See Lemma 2.1 in \cite{Wu}.
\end{proof}

\begin{lemma}
\label{3q}
There are no $3q$ parallel edges in $\Gamma_P$.
\end{lemma}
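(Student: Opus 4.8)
The plan is to argue by contradiction via the standard Euler characteristic estimate applied to $\Gamma_Q$. Suppose that $e_1,\dots,e_{3q}$ are mutually parallel edges of $\Gamma_P$; after reindexing they are the successive edges of a rectangle $R\subset\hat P$ (a disk in $\hat P$ whose frontier consists of $e_1$, $e_{3q}$, and two arcs contained in the two vertices $u,w$ of $\Gamma_P$ that all the $e_i$ join). Recall that $\Gamma_Q$ is a graph on the sphere $\hat Q$ with $q$ interior vertices — the $q$ copies of $\beta$ bounding the disks of $\hat Q\cap H_\beta$ — together with the outer vertex $\partial\hat Q$, so $q+1$ vertices in all; and recall the labelling of edge endpoints of $\Gamma_P$ around an interior vertex by the labels $1,\dots,q$, where the label of an endpoint records which copy of $\beta$ (equivalently, which interior vertex of $\Gamma_Q$) it lies on, these labels occurring in cyclic order $1,2,\dots,q,1,2,\dots$ along the arc of $\partial R$ in $u$, and in the opposite cyclic order along the arc in $w$. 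In particular the $3q$ arcs $e_i$ are $3q$ distinct edges of $\Gamma_Q$ joining its $q+1$ vertices.

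Now let $\Lambda\subseteq\Gamma_Q$ be the subgraph consisting of the edges $e_1,\dots,e_{3q}$ together with the vertices of $\Gamma_Q$ that they meet, so that $\Lambda$ is a graph on $S^2$ with at most $q+1$ vertices and exactly $3q$ edges. If $\Lambda$ had no monogon face (one whose boundary is a single loop edge) and no bigon face (one whose boundary consists of two edges with the same pair of endpoints), then every face of $\Lambda$ would have at least three sides, and counting sides would give $3q=E(\Lambda)\le 3\,V(\Lambda)-6\le 3(q+1)-6=3q-3$, which is absurd; the same count, with the usual modification, handles the case in which $\Lambda$ is disconnected. Hence $\Lambda$ contains a monogon or a bigon.

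It remains to convert such a region into a contradiction with the earlier lemmas. An innermost monogon of $\Lambda$ (chosen innermost with respect to all of $\Gamma_Q$) should, after a routine innermost-disk reduction and the usual label analysis of $1$-cycles, produce a $1$-sided disk face of $\Gamma_Q$, contradicting Lemma~\ref{1-sided}. An innermost bigon of $\Lambda$ should, after a similar reduction, produce a genuine disk face of $\Gamma_Q$ bounded by two of the edges $e_1,\dots,e_{3q}$; these two edges are then parallel in $\Gamma_Q$ and, being part of the rectangle $R$, also parallel in $\Gamma_P$, contradicting Lemma~\ref{paralleledge}. The step I expect to be the main obstacle is precisely this last one: a monogon or bigon of the subgraph $\Lambda$ need not be a face of $\Gamma_Q$, since the disk it bounds may still contain edges, interior vertices, or closed curves of $P\cap Q$, so one must run an innermost-disk induction to extract a genuine $1$-sided disk face or a genuine bigon face of $\Gamma_Q$; one must also treat separately the cases in which some $e_i$ is incident to $\partial\hat Q$ (so is a boundary edge of $\Gamma_Q$) or in which $u$ or $w$ is the outer vertex $\partial\hat P$, using the label bookkeeping to keep track of which edges are involved.
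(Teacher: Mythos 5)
Your proposal is correct and is essentially the paper's own argument: the paper forms the same subgraph $\Gamma'$ of $\Gamma_Q$ from the $3q$ parallel edges, applies the same Euler-characteristic count to produce a face with fewer than three sides, and kills a bigon via Lemma~\ref{paralleledge}. The only differences are that you also treat monogon faces explicitly (via an innermost loop and Lemma~\ref{1-sided} --- a case the paper's inequality $2e\ge 3f$ silently ignores, so this is a point in your favour), and that the ``main obstacle'' you flag in the bigon case is not actually an obstacle: for Lemma~\ref{paralleledge} the two edges need only cobound a rectangle in $\hat Q$ (parallelism in the loose sense used here and in Scharlemann--Wu), so no reduction to a genuine bigon face of $\Gamma_Q$ is required.
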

\begin{proof}
Suppose that there are $3q$ parallel edges in $\Gamma_P$. Let $\Gamma^\prime$ be the subgraph of $\Gamma_Q$ formed by these edges and all the vertices of $\Gamma_Q$. Let $e$ be the number of edges of $\Gamma^\prime$ and $f$ be the number of disk faces of $\Gamma^\prime$. By Euler characteristic formula, $q-e+f\ge 1$, then $f\ge e-q+1$.

Suppose that there are no 2-sided disk faces of $\Gamma^\prime$. Then $2e\ge 3f$. Thus $\frac{2}{3}e\ge f\ge e-q+1$. Then $e\le 3q-3$, contradicting with $e=3q$. Now there is a 2-sided disk face of $\Gamma^\prime$. The two edges bounding this disk face are parallel in both $\Gamma_P$ and $\Gamma_Q$. This contradicts Lemma \ref{paralleledge}.
\end{proof}

Let $F^\prime$ be the component of $F-(\partial P-\partial\hat P)$ such that $\partial\hat P\subset F^\prime$. Number the components of $\partial P-\partial\hat P$ as $\partial_1P$, $\partial_2P$,...,$\partial_uP$,...,$\partial_pP$ consecutively on $F$ such that $\partial_1P=\partial F^\prime$. This means that $\partial_uP$ and $\partial_{u+1}P$ bound an annulus in $F$ with interior disjoint from $P$. See Figure \ref{7}. Similarly, number the components of $\partial Q-\partial \hat Q$ as $\partial_1Q$, $\partial_2Q$,...,$\partial_iQ$,...,$\partial_qQ$. These give the corresponding labels of the vertices of $\Gamma_P$ and $\Gamma_Q$. Since $M$ is simple, then $p,q>1$.

\begin{figure}
\centering
\includegraphics[scale=0.35]{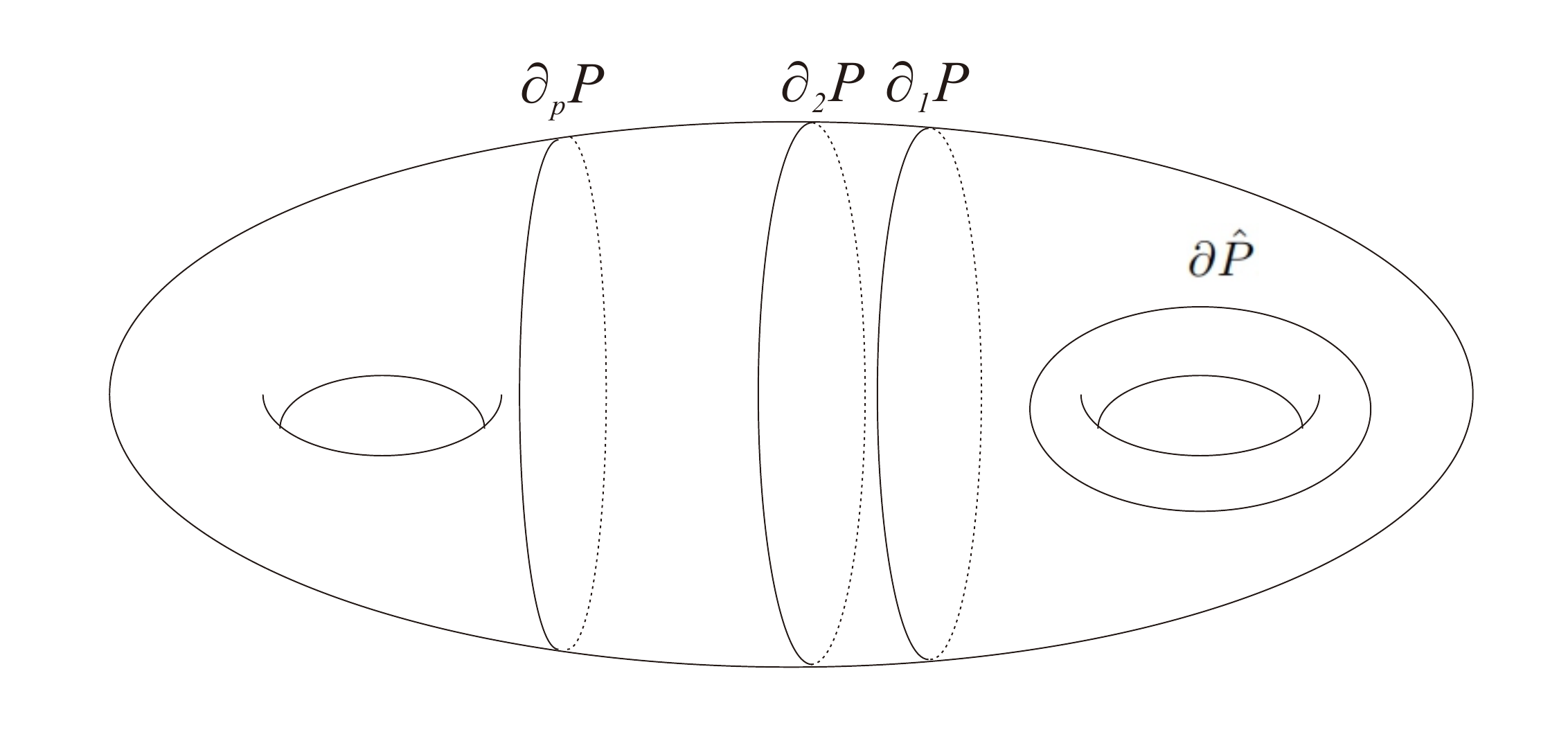}
\caption{}
\label{7}
\end{figure}

Let $x$ be an endpoint of an arc in $P\cap Q$. If $x$ belongs to $\partial_uP\cap\partial_iQ$, then we label it as $(u,i)$. If $x$ belongs to $\partial\hat{P}\cap\partial_iQ$, then we label it as $(*,i)$. If $x$ belongs to $\partial_uP\cap\partial \hat Q$, then we label it as $(u,*)$. If $x$ belongs to $\partial\hat{P}\cap\partial \hat Q$, then we label it as $(*,*)$. On $\Gamma_P$, the labels are written only by the second index $i$ (or $*$) for short. And on $\Gamma_Q$, the labels are written only by the first index $u$ (or $*$) for short. Here, $*$ is called the \textit{boundary label}. Around each vertex of $\Gamma_P$, the labels appear as $q,q-1,\cdots,1,*,\cdots,*,1,2,\cdots,q$ in clockwise direction or anticlockwise direction. By giving a sign to the numbered labels as in \cite{Xing}, we can assume that

\begin{assumption}
\label{assump}
  $-q, -(q-1), \cdots, -1, *, \cdots, *, +1, +2, \cdots, +q$ (resp. $-p, -(p-1), \cdots, -1, *, \cdots, *, +1, +2, \cdots, +p$) appear in clockwise direction around each vertex of $\Gamma_P$ (resp. $\Gamma_Q$).
\end{assumption}

\begin{note} By taking $\Gamma_P$ as a graph on disk $\hat P$, the labels $-q$,$-(q-1)$,$\cdots$,$-1$,$*$,$\cdots$,$*$,$+1$,
$+2$,$\cdots$,$+q$ appear on $\partial \hat P$ in anticlockwise direction. See Figure \ref{8}.
\end{note}

\begin{figure}
\centering
\includegraphics[scale=0.35]{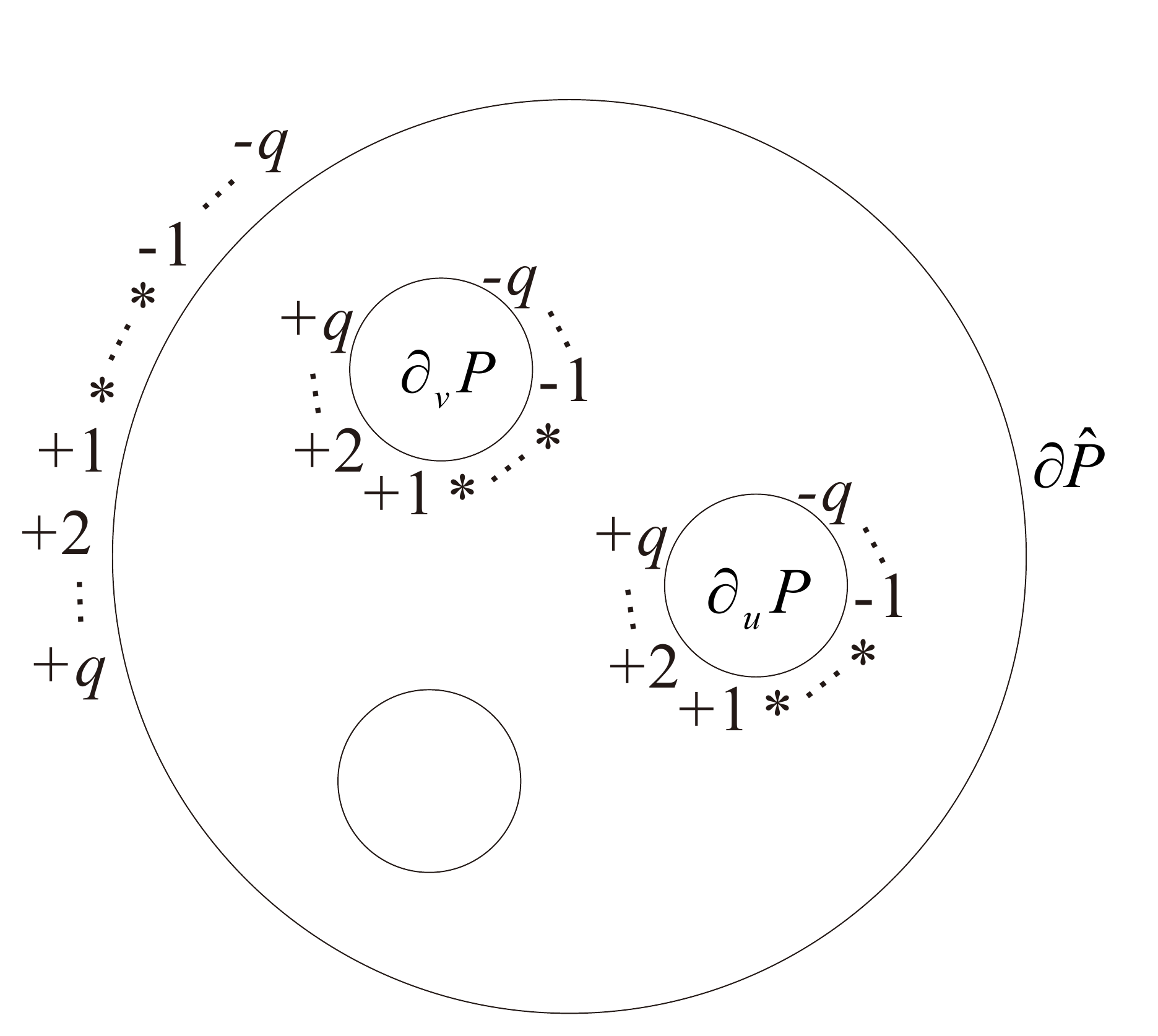}
\caption{}
\label{8}
\end{figure}

On $\Gamma_P$ (resp. $\Gamma_Q$), each edge has a \textit{label pair} $(i,j)$ of its two endpoints, $i,j\in\{+1,+2,\cdots,+q,-q,\cdots,-1,*\}$. As Lemma 3.3 in \cite{Xing}, we have a parity rule.

\begin{lemma}
\label{label}
Let $e$ be an edge in $\Gamma_P$ (resp. $\Gamma_Q$). $e$ cannot have label pair $(i,i)$, $i\in\{+1,+2,\cdots, +q,-q,\cdots,-1\}$.
\end{lemma}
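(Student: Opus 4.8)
The plan is to prove this parity rule by the standard orientation argument — the higher-genus adaptation of the Gordon--Litherland parity rule carried out in \cite{Xing} — reducing it to a single ``opposite endpoints'' statement about an arc of $P\cap Q$.

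First I would fix orientations: orient $M$, and hence $F=\partial M$ as its boundary, and orient the planar surfaces $P$ and $Q$ (both are orientable, being subsurfaces of the disks $\hat P$ and $\hat Q$). These induce boundary orientations on every component of $\partial P$ and $\partial Q$, in particular on each $\partial_u P$ and each $\partial_i Q$. For a point $x\in\partial_u P\cap\partial_i Q$ I define $\eta(x)\in\{+1,-1\}$ to be the sign of the intersection of the oriented curves $\partial_u P$ and $\partial_i Q$ in the oriented surface $F$. With the sign convention of \cite{Xing} realizing Assumption \ref{assump}, the signed label carried by $x$ on $\Gamma_P$ is exactly $\eta(x)\cdot i$; that is, the label is $+i$ when $\eta(x)=+1$ and $-i$ when $\eta(x)=-1$. (Heuristically, the two crossings of a fixed $\partial_u P$ with a fixed $\partial_i Q$ are traversed in opposite directions across the annular family of copies of $\beta$, so they receive opposite signs; this is what produces the block form $-q,\dots,-1,\ast,\dots,\ast,+1,\dots,+q$ around each vertex.)

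The core step is the following claim: \emph{if $e$ is an arc component of $P\cap Q$ with endpoints $x$ and $y$, then $\eta(x)=-\eta(y)$.} To prove it, orient $e$ from $x$ to $y$. Since $P$ and $Q$ are transverse, at every point $p\in e$ we have $T_pP\cap T_pQ=T_pe$, and the orientations of $P$, $Q$ and $M$ assign a sign $\epsilon(p)$ to the triple consisting of the positive tangent to $e$ together with vectors completing the orientations of $T_pP$ and $T_pQ$. This sign varies continuously with $p$, hence is constant along the connected arc $e$, so $\epsilon(x)=\epsilon(y)$. At the endpoint $x$ the arc $e$ points into $M$, while at $y$ it points out of $M$, so the positive tangent to $e$ compared with the outward normal of $F$ reverses between the two ends. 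Unwinding the definition of $\epsilon$ in terms of the boundary orientations of $\partial P$ and $\partial Q$, one checks that this reversal is precisely the statement that the planar intersection sign flips, i.e. $\eta(x)=-\eta(y)$.

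Granting the claim, the lemma is immediate. Suppose some edge $e$ of $\Gamma_P$ had label pair $(i,i)$ with $i\in\{+1,\dots,+q,-q,\dots,-1\}$, say both endpoints carry the signed label $+k$ (the case $-k$ is identical). Then both endpoints lie on the same component $\partial_k Q$ and have $\eta=+1$; in particular $\eta(x)=\eta(y)$, contradicting the claim. Hence no edge of $\Gamma_P$ has label pair $(i,i)$, and the argument applies verbatim to $\Gamma_Q$ with the roles of $P$ and $Q$ exchanged. The delicate point, and the place I expect to spend the most care, is the bookkeeping in the second paragraph: one must verify that the sign $\eta$ defined from the global orientations of $P$ and $Q$ really is the sign attached to the labels in Assumption \ref{assump} — equivalently, that the induced boundary orientations of the parallel copies $\partial_1 Q,\dots,\partial_q Q$ are mutually coherent, so that the two types of crossing correspond to the labels $-i$ and $+i$. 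This compatibility is exactly what is arranged in \cite{Xing}; once it is in place, the parity rule follows from the opposite-endpoints claim with no further work.
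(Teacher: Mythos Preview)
Your argument is correct and is exactly the standard orientation/parity-rule computation that the paper invokes: the paper does not supply its own proof here but simply cites Lemma~3.3 of \cite{Xing}, and what you have written is precisely the Gordon--Litherland style argument carried out there (intersection signs on $F$ are constant along an arc of $P\cap Q$ up to the in/out flip at the two endpoints, forcing $\eta(x)=-\eta(y)$). The only point to be careful about, which you already flag, is checking that the sign convention you set up matches the one fixed in Assumption~\ref{assump}; once that bookkeeping is done your proof is complete and aligned with the cited source.
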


Let $x$ be a signed label in $\{+1, +2,..., +q, -q,..., -1\}$. An \textit{$x$-edge} is an edge in $\Gamma_P$ with label $x$ at one endpoint. Let $B_P^x$ denote the subgraph of $\Gamma_P$ consisting of all the vertices of $\Gamma_P$ and all the $x$-edges. Let $x$ be a signed label in $\{+1, +2,..., +p, -p,..., -1\}$, the definition for $B_Q^x$ is the same.

A cycle of $B_P^x$ which bounds a disk face of $\Gamma_P$ and contains no edges with boundary labels (at the endpoints) is called a \textit{virtual Scharlemann cycle}. In a virtual Scharlemann cycle, the label pair of each edge is the same, which is called the \textit{label pair} of the virtual Scharlemann cycle. A virtual Scharlemann cycle with label pair $(i,j)$ is called a \textit{Scharlemann cycle} if $i\ne -j$.

As in \cite{Qiu}, we have the following Lemma \ref{opposite} and Lemma \ref{Scharlemann}.

\begin{lemma}
\label{opposite}
Suppose $S=\{e_i\mid i=1,2,...,n\}$ is a set of parallel edges in $\Gamma_P$. Each edge in $S$ has no boundary labels. If one of the edges, say $e_k$, has opposite labels at its two endpoints, then each edge in $S$ has opposite labels at its two endpoints.
\end{lemma}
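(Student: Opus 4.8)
The plan is to track the labels of the edges as we traverse the parallel family $e_1,\dots,e_n$ from one side to the other, and to exhibit a single numerical quantity attached to each edge that is invariant along the family; the hypothesis on $e_k$ then pins down this invariant and forces every edge to have opposite labels. Recall that two labels are \emph{opposite} when they are $+k$ and $-k$ for the same $k$, i.e.\ the edge has label pair $(i,-i)$. First I would fix the unordered pair of vertices $\{u,w\}$ that every edge of $S$ joins (these coincide for a family of parallel loops, a case handled identically below), and orient the family so that $e_1,\dots,e_n$ occur in this order, each consecutive pair $e_i,e_{i+1}$ cobounding a bigon face of $\Gamma_P$.

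Two standard facts about parallel edges drive the argument. Since $e_i$ and $e_{i+1}$ cobound a bigon, their endpoints at $u$ are adjacent around $u$ with no other endpoint between them; hence, reading $e_1,\dots,e_n$ in order, the labels carried at $u$ occupy consecutive slots in the cyclic order $-q,\dots,-1,*,\dots,*,+1,\dots,+q$ of Assumption~\ref{assump}. Because no edge of $S$ carries a boundary label, this consecutive block contains no $*$; consequently it cannot straddle the $*$-block separating $-1$ from $+1$, and its only possible discontinuity in sign is the wrap from $+q$ to $-q$. The second fact is the reversal property of parallel edges: at the opposite vertex $w$ the endpoints of $e_1,\dots,e_n$ appear in the reverse cyclic order, so the labels at $w$ run through consecutive slots in the direction opposite to those at $u$.

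To make this quantitative I would introduce the clockwise position $\pi$ on the non-$*$ labels, $\pi(-q)=1,\dots,\pi(-1)=q,\ \pi(+1)=q+1,\dots,\pi(+q)=2q$, read modulo $2q$. A direct check shows two labels are opposite precisely when their positions sum to $2q+1$, equivalently $\pi+\pi'\equiv 1\pmod{2q}$, and $2q+1$ is the only value in the range $\{2,\dots,4q\}$ with this residue. Writing $a_i$ and $b_i$ for the positions of the two ends of $e_i$, consecutiveness gives $a_{i+1}\equiv a_i+\varepsilon$ and, by the reversal property, $b_{i+1}\equiv b_i-\varepsilon\pmod{2q}$ for a fixed $\varepsilon=\pm 1$. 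Hence $a_i+b_i$ is constant modulo $2q$ along the family. Since $e_k$ has opposite labels, $a_k+b_k\equiv 1\pmod{2q}$, so $a_i+b_i\equiv 1\pmod{2q}$ for every $i$; as each such sum lies in $\{2,\dots,4q\}$ it must equal $2q+1$, i.e.\ every $e_i$ has opposite labels.

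The main obstacle is bookkeeping rather than depth: one must verify carefully that the consecutive block of labels at each vertex never crosses the $*$-block, which is exactly where the no-boundary-label hypothesis is used, so that the only wraparound is $+q\to-q$, and this is precisely why the invariant is taken modulo $2q$ rather than as an honest integer sum. A secondary point to check is that the reversal property and the slot bookkeeping go through verbatim for a family of parallel loops at a single vertex, where the two ends of each $e_i$ are read in the same cyclic system around $u$: the nesting of the loops supplies the same order-reversal between the two blocks of endpoints, and the parity rule (Lemma~\ref{label}) guarantees that the two ends of each loop carry distinct labels.
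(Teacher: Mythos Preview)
Your argument is correct. Note that the paper itself does not give a proof of this lemma; it simply records that it follows ``as in \cite{Qiu}'' (Qiu--Zhang), so there is no in-paper argument to compare against. What you have written is precisely the standard proof one finds in that literature: encode each signed label by its clockwise position $\pi\in\{1,\dots,2q\}$, use the bigon between consecutive parallel edges together with Assumption~\ref{assump} (all interior vertices carry the label pattern in the same rotational sense in the disk $\hat P$) to see that the positions step by $+\varepsilon$ at one end and $-\varepsilon$ at the other, and conclude that the sum of positions is constant modulo $2q$, hence equal to $2q+1$ throughout once it equals $2q+1$ at $e_k$.

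Two small points you might sharpen if you write this up formally. First, the reversal property (``the endpoints at $w$ appear in the reverse cyclic order'') is a consequence of the planarity of $\hat P$ and the uniform clockwise convention of Assumption~\ref{assump}; stating this explicitly removes any ambiguity about whether $\varepsilon$ at $w$ could have the same sign as at $u$. Second, your treatment of the wraparound is exactly right: the hypothesis that no $e_i$ carries a boundary label forces the consecutive block of endpoint positions to avoid the $*$-segment, so the only jump in $\pi$ is $+q\to -q$, which is $+1$ modulo $2q$; this is where the modulus $2q$ (rather than a larger period including the $*$'s) is justified.
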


\begin{lemma}
\label{Scharlemann}
Let $x\in\{+1,+2,...,+q,-q,...,-1\}$. Let $D$ be a disk face of $B_P^x$ and no edges in $D$ have boundary labels. Then there is a virtual Scharlemann cycle lying in $D$.
\end{lemma}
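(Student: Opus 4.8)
The plan is to study the restriction of $\Gamma_P$ to the closed disk $\bar D$ and to exhibit inside it a single disk face of $\Gamma_P$ whose boundary lies entirely in $B_P^x$; such a face is, by definition, a virtual Scharlemann cycle, because the hypothesis guarantees that none of its edges carries a boundary label. First I would record the basic structural reduction. Since $B_P^x$ contains \emph{all} vertices of $\Gamma_P$, no vertex lies in the interior of the face $D$, so every vertex of $\Gamma_P$ meeting $\bar D$ lies on $\partial D$; moreover $D$ contains no $x$-edge in its interior. Hence $\Gamma_P\cap\bar D$ consists of the boundary cycle $\partial D$, all of whose edges are $x$-edges, together with a family of disjoint non-$x$-edges (\emph{chords}) joining vertices of $\partial D$, and these chords cut $D$ into the disk faces of $\Gamma_P$ that lie in $D$. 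The task then becomes to produce, among these, a face bounded entirely by $x$-edges.

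The engine for this is the monotone cyclic order of labels $-q,\dots,-1,\ast,\dots,\ast,+1,\dots,+q$ around each vertex (Assumption \ref{assump}). At the corner of $D$ at a vertex $v$, the two bounding $x$-edges have labels at $v$ that sandwich, in this cyclic order, the labels of every chord leaving $v$ into $D$. Tracking these extreme labels as one traverses $\partial D$, and using that each edge of $\partial D$ carries the label $x$ at one of its ends, I would run an innermost/extremal argument exactly as in \cite{Qiu}: following the monotonicity one isolates a subdisk of $D$ that is a single face $f$ of $\Gamma_P$ whose boundary meets no chord, so that $\partial f$ lies entirely in $B_P^x$. The parity rule (Lemma \ref{label}) then forbids an edge of $\partial f$ from having the same label at its two ends, and Lemma \ref{opposite} forces the edges of $\partial f$ to share a single label pair. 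Thus $f$ is a virtual Scharlemann cycle lying in $D$, with a well-defined label pair.

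The hard part will be controlling the interior chords. A bare Euler-characteristic or dual-graph count is not enough: a disk subdivided by $c$ disjoint chords has exactly $c+1$ faces, and when $c\ge 1$ every one of them abuts a chord, so no purely combinatorial count can deliver a face bounded solely by $x$-edges. It is therefore essential to use the monotone ordering of the labels around the vertices to force the innermost region against $\partial D$ to be free of chords and bounded only by $x$-edges; reconciling the existence of this face with the possible presence of interior chords is the delicate step, and it is where the minimality of $\abs{P\cap Q}$ (so that $P\cap Q$ carries no trivial arcs) together with Lemma \ref{opposite} are indispensable. Once such a face $f$ is located, the verification that its edges carry no boundary label and share a common label pair is immediate, completing the proof.
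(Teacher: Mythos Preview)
The paper does not supply its own proof of this lemma; it simply records that the statement holds ``as in \cite{Qiu}''. Your proposal goes further and tries to sketch the mechanism, and the opening reduction is correct: since $B_P^x$ contains every vertex, $D$ has no interior vertices, and the edges of $\Gamma_P$ inside $D$ are chords which are not $x$-edges.

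Where the proposal breaks down is exactly the step you flag as the ``hard part''. Because $D$ is a face of $B_P^x$, the only $x$-edges contained in $\bar D$ are those of $\partial D$; hence the only cycle of $B_P^x$ lying in $D$ is $\partial D$ itself. The conclusion ``there is a virtual Scharlemann cycle lying in $D$'' is therefore equivalent to ``$D$ is already a disk face of $\Gamma_P$'', i.e.\ $D$ contains \emph{no} chords at all. Your plan, by contrast, looks for a face $f\subset D$ of $\Gamma_P$ ``whose boundary meets no chord'', while simultaneously conceding that if even one chord exists every face of $\Gamma_P$ in $D$ abuts a chord. These two things cannot be reconciled by an innermost argument: no amount of label monotonicity will produce a chord-free face once a chord is present. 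What the argument in \cite{Qiu} actually does is use the cyclic label order (Assumption~\ref{assump}) together with the hypothesis that no edge in $D$ carries a boundary label to exclude interior edges entirely; your sketch never reaches this reformulation.

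A second, smaller point: your appeal to Lemma~\ref{opposite} to force a common label pair on $\partial f$ is misplaced. That lemma concerns families of \emph{parallel} edges, whereas the edges of the boundary of a face of $\Gamma_P$ are consecutive around a disk, not parallel. The common label pair of a virtual Scharlemann cycle comes instead from the fact that at each corner of a face of $\Gamma_P$ the two incident edges have adjacent labels, combined with each edge carrying the label $x$ at one end; Lemma~\ref{opposite} plays no role here.
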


\begin{lemma}
\label{compress}
There are no Scharlemann cycles in $\Gamma_P$ or $\Gamma_Q$.
\end{lemma}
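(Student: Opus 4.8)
The plan is to show that a Scharlemann cycle in $\Gamma_P$ (the argument for $\Gamma_Q$ is symmetric) forces a compressing disk, or a reducing sphere, for $M(\beta)$ inside $M$, contradicting Lemma \ref{incompressible} (or the irreducibility/$\partial$-irreducibility of $M$ itself). Suppose $\sigma$ is a Scharlemann cycle in $\Gamma_P$ with label pair $(i,j)$, $i\ne -j$, bounding a disk face $D$ of $\Gamma_P$. All edges of $\sigma$ run between vertices $\partial_iQ$ and $\partial_jQ$ of $\Gamma_Q$, none of them is a boundary edge, and consecutive edges around $\partial D$ are parallel in $\Gamma_P$. First I would isolate the piece of $Q$ that the edges of $\sigma$ cut off: the edges of $\sigma$, together with subarcs of $\partial_iQ$ and $\partial_jQ$, bound a subsurface of the planar surface $Q$. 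Since the labels at the two ends of each edge are $i$ and $j$ with $i\ne -j$, all the edges of $\sigma$ hit $\partial_iQ$ and $\partial_jQ$ from the \emph{same} side (this is exactly what the $i\ne-j$ condition buys us, via the sign conventions of Assumption \ref{assump} and Lemma \ref{opposite}), so a small regular neighborhood of $\sigma\cup(\text{arcs of }\partial_iQ\cup\partial_jQ)$ in $Q$ is an annulus $A$ or, if $i=j$ is impossible by Lemma \ref{label} so $i\ne j$, more precisely a subsurface whose boundary we can analyze.

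Next I would reconstruct the standard ``thickened face'' object. Let $n$ be the number of edges of $\sigma$; the disk $D\subset\hat P$ meets $H_\alpha$ in $n$ meridian disks of the 2-handle, and $D\cup(\text{these }n\text{ 2-handle pieces})$ together with the part of $F$ between $\partial_iQ$ and $\partial_jQ$ that $\sigma$ runs over yields a disk or annulus properly embedded in $M(\beta)$ after we note that the $n$ parallel edges, being edges of $\Gamma_P$, are arcs of $Q=\hat Q\cap M$, and the $n$ intervals of $\partial_iQ,\partial_jQ$ cobound an annulus in $F$ with core a curve we control. Thread this together: cutting the 2-handle region appropriately, $\partial D$ and the Scharlemann-cycle data produce a surface $\Sigma$ in $M$ with $\partial\Sigma$ consisting of arcs on $Q$ and arcs on $F$, and by pushing $\Sigma$ across $H_\beta$ — using that these arcs meet $\partial_iQ,\partial_jQ$ from one side — I obtain an embedded surface in $M(\beta)$. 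A Euler-characteristic count (each 2-handle disk contributes, the face $D$ contributes $1$, the annular neighborhood in $F$ contributes $0$) shows $\Sigma$ is either a disk whose boundary lies on $\partial M(\beta)$ and is essential there — contradicting that $Q$ was chosen with $|\hat Q\cap H_\beta|$ minimal among essential disks, hence contradicting minimality or Lemma \ref{incompressible} — or a $2$-sphere, contradicting irreducibility of $M(\beta)$.

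I expect the main obstacle to be the bookkeeping that turns the combinatorial Scharlemann cycle into the right embedded surface: one must check carefully that the arcs of $\partial_iQ$ and $\partial_jQ$ used are \emph{consecutive} and disjoint from the rest of $\sigma$, that the subsurface of $Q$ cut off is genuinely an annulus (not something with extra boundary from other edges of $\Gamma_Q$), and that after attaching the $n$ copies of the core of $H_\alpha$ the result really closes up to a disk or sphere rather than a higher-genus surface. The condition $i\ne -j$ is precisely what is needed so that the two boundary circles of the annulus $A$ in $Q$, once $A$ is completed by the $2$-handle region of $H_\alpha$, are both \emph{meridional} for $H_\beta$ in a coherent way; if instead $i=-j$ one only gets the weaker ``virtual Scharlemann cycle'' conclusion of Lemma \ref{Scharlemann}, which is why the statement is restricted to genuine Scharlemann cycles. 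Once the surface $\Sigma$ is in hand, the contradiction with Lemma \ref{incompressible} and the minimality of $|\hat P\cap H_\alpha|$, $|\hat Q\cap H_\beta|$ is immediate, so no delicate estimate is required beyond that construction.
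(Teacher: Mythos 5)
The target of your argument (use the Scharlemann-cycle face to contradict the minimality of $\lvert\hat Q\cap H_\beta\rvert$ or the irreducibility of $M(\beta)$) is the right one -- this is exactly what the cited proof does, since the paper's own ``proof'' is just a reference to Lemma 2.5.2(a) of \cite{Culler} -- but your construction has a genuine error at its core. A disk face $D$ of $\Gamma_P$ contains no vertices or edges of $\Gamma_P$ in its interior, so $D$ lies in $P\subset M$ and is \emph{disjoint} from $H_\alpha$: it does not ``meet $H_\alpha$ in $n$ meridian disks,'' and there are no 2-handle pieces of $H_\alpha$ to attach. The boundary of $D$ consists of edges lying on $Q$ and corners lying on the curves $\partial_uP\subset F$. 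The handle that enters the argument is $H_\beta$, because the labels $i,j$ of the cycle name boundary components of $Q$; the relevant auxiliary pieces are the annulus $A\subset F$ between $\partial_iQ$ and $\partial_jQ$ and the ball of $H_\beta$ lying between the two disks of $\hat Q\cap H_\beta$ that cap $\partial_iQ$ and $\partial_jQ$. As written, your ``thickened face'' surface $\Sigma$ and the Euler-characteristic count are built on the wrong object and cannot be carried out.

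You have also misidentified what the condition $i\ne -j$ buys. Its content is combinatorial adjacency, not a ``same side''/``coherently meridional'' statement: the corners of a face of $\Gamma_P$ contain no edge endpoints in their interiors, so the two labels at the ends of each corner are adjacent in the cyclic sequence $-q,\dots,-1,*,\dots,*,+1,\dots,+q$ of Assumption \ref{assump}; since a virtual Scharlemann cycle has no boundary labels, $(i,i)$ is impossible by Lemma \ref{label}, and the pairs $(+1,-1)$ and $(+q,-q)$ are exactly those excluded by $i\ne -j$, the label pair of a genuine Scharlemann cycle must be $\{+k,+(k+1)\}$ or $\{-(k+1),-k\}$. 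This is what guarantees that all corners of $D$ lie in a single annulus $A$ between \emph{consecutive} components $\partial_kQ$, $\partial_{k+1}Q$ (with interior disjoint from $\partial Q$ and $\partial\hat Q$), and that the corresponding two meridian disks of $\hat Q\cap H_\beta$ cobound a ball $B\subset H_\beta$ missing the rest of $\hat Q$. The argument of \cite{Culler} then surgers $\hat Q$ using $D$, $A$ and $B$ to produce a new essential disk with boundary $\partial\hat Q$ meeting $H_\beta$ strictly fewer times (any resulting sphere component is discarded using irreducibility of $M(\beta)$), contradicting the minimality in the choice of $\hat Q$. Without the adjacency step, the corners of $D$ need not lie in an annulus disjoint from the rest of $\partial Q$, and your surface does not close up; so the proposal, while aiming at the correct contradiction, has a genuine gap: the face does not meet $H_\alpha$, the wrong handle is used, and the key reduction of the label pair to consecutive labels is missing.
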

\begin{proof}
See Lemma 2.5.2 (a) in \cite{Culler}.
\end{proof}

The following Lemma \ref{valency} is Lemma 2.6.5 in \cite{Culler}.
\begin{lemma}
\label{valency}
Let $\Gamma$ be a graph in a disk $D$ with no trivial loops or parallel edges, such that every vertex of $\Gamma$ belongs to a boundary edge. Then $\Gamma$ has a vertex of valency at most 3 which belongs to a single boundary edge.
\end{lemma}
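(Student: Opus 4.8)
The plan is to prove this by strong induction on the number $n$ of vertices of $\Gamma$, treating $\partial D$ as a single distinguished boundary vertex and calling the remaining edges \emph{interior edges}. For a vertex $v$ write $c(v)$ for the number of boundary edges at $v$; since every vertex belongs to a boundary edge, $c(v)\ge 1$, and the conclusion asks precisely for a vertex with $c(v)=1$ and at most two interior edges. The base case $n=1$ is immediate: the lone vertex has no interior edges (a loop would bound a disk in $D$ and hence be trivial), and it cannot carry two boundary edges, since two boundary edges of a single vertex together with an arc of $\partial D$ would cut off an empty disk from $D$ and so be parallel. Thus its valency is $1$.

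First I would dispose of the case in which every vertex has $c(v)=1$. Collapsing $\partial D$ to a point turns $\Gamma$ into a graph on $S^2$ in which the image $v_0$ of $\partial D$ is joined to each of the $n$ vertices by exactly one edge; equivalently, the subgraph $\Lambda$ of interior edges together with the universal vertex $v_0$ is planar, so $\Lambda$ is outerplanar. Every outerplanar graph has a vertex of degree at most $2$, so some vertex $z$ meets at most two interior edges; with $c(z)=1$ this gives valency at most $3$ at a single boundary edge, as required.

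The remaining case is the heart of the argument: some vertex $w$ has $c(w)\ge 2$. Two boundary edges of a common vertex, together with an arc of $\partial D$, cut off a disk; among all such disks, over all vertices with $c\ge 2$, I would choose one, $R$, that is minimal under inclusion. Because edges cannot cross the two boundary edges bounding $R$, every vertex lying strictly inside $R$ sends all of its boundary edges to the arc $\partial D\cap R$, and minimality of $R$ then forces each such vertex to have exactly one boundary edge; also $R$ contains at least one vertex, for otherwise its two bounding edges would be parallel. Now view $\Gamma\cap R$ as a graph $\Gamma_R$ in the disk $R$. Every vertex strictly inside $R$ has a boundary edge of $\Gamma_R$, namely its unique $\Gamma$-boundary edge; crucially, any interior edge of $\Gamma$ running from an inside vertex to the corner vertex $w$ of $R$ now counts as a \emph{boundary} edge of $\Gamma_R$, since $w$ lies on $\partial R$. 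Hence $\Gamma_R$ inherits all the hypotheses and has fewer vertices, so the inductive hypothesis yields a vertex $z$ of $\Gamma_R$ of valency at most $3$ meeting a single boundary edge of $\Gamma_R$. Since $z$ already carries one $\Gamma$-boundary edge landing on the arc, this single boundary edge must be that one, so $z$ has no edge to $w$; its valency and boundary-edge count therefore agree in $\Gamma$ and in $\Gamma_R$, and $z$ is the desired vertex.

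The main obstacle is exactly the interplay between low valency and the requirement of a \emph{single} boundary edge: a vertex of small interior degree may a priori carry many boundary edges, and no global Euler-characteristic count excludes this (such counting only yields a vertex of valency at most $4$ or $5$). The decisive points are therefore structural rather than enumerative — first, that the absence of parallel edges forbids the empty bigons in which a vertex could otherwise hide several boundary edges; and second, the reclassification of edges-to-$w$ as boundary edges of $\Gamma_R$, which is what lets the peeling step convert a multiple-boundary-edge vertex into a strictly smaller instance satisfying the same hypotheses.
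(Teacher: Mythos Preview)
The paper does not prove this lemma; it merely cites Lemma~2.6.5 of Culler--Gordon--Luecke--Shalen. Your inductive strategy is natural, and the base case and Case~1 (where every vertex has $c(v)=1$, so the interior-edge graph is simple outerplanar and hence has a vertex of degree at most~$2$) are fine. The difficulty is in Case~2, in the assertion that $\Gamma_R$ ``inherits all the hypotheses.''

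The hypothesis that fails is ``no parallel edges.'' A vertex $v$ in the interior of $R$ has exactly one $\Gamma$-boundary edge $f$ landing on the arc $A=\partial D\cap\partial R$, and it may also carry an interior edge $g$ of $\Gamma$ running to the corner vertex $w$. In $\Gamma_R$ both $f$ and $g$ count as boundary edges of $v$, since $w$ and $A$ both lie on $\partial R$. Now suppose that $f$, $g$, and one of the two bounding edges---say $e_1$---form a triangular face of $\Gamma$ (equivalently, a triangle on the three vertices $v$, $w$, $\partial D$ once $\partial D$ is collapsed to a point). Then in $\Gamma_R$ the edges $f$ and $g$ cobound an empty bigon: the region between them on the $e_1$-side contains no $\Gamma_R$-vertices and no $\Gamma_R$-edges, because $e_1$ itself has been absorbed into $\partial R$. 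Nothing in your minimality condition on $R$ rules out such a triangle---minimality only forces interior vertices of $R$ to have $c=1$, and $v$ does. So $\Gamma_R$ can have parallel edges, and you cannot invoke the inductive hypothesis as stated.

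Passing to the reduced graph $\overline{\Gamma_R}$ does not immediately rescue the argument either: the vertex $z$ it produces has valency at most~$3$ only in $\overline{\Gamma_R}$, and undoing the reduction can push its $\Gamma$-valency up to~$4$ (one edge to $A$, one to $w$, and two interior edges). A repair is possible---for instance, by enlarging the family of disks over which you minimise so that $\partial R$ may consist of an arc of $\partial D$ together with a longer path in $\Gamma$, which lets the offending edge $g$ be used to cut $R$ down further---but this step needs to be carried out explicitly before the induction goes through.
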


\section{Proof of theorem 1.1}
Let $R_P=\Delta(\alpha,\partial \hat Q)$, $\Delta =\Delta(\alpha, \beta)$.
\begin{lemma}
\label{diskface}
Suppose that $\Delta\ge 6$. For any label $v\in\{+1,..., +p, -p,..., -1\}$, there are at least $(\frac{\Delta}{2}q+\frac{R_P}{2}-3q+3)$ 2-sided disk faces of $B_Q^v$.
\end{lemma}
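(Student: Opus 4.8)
The plan is to count, on the graph $\Gamma_Q$ in the disk $\hat Q$, the contribution of the $v$-edges and then apply an Euler-characteristic argument to the subgraph $B_Q^v$. First I would fix the label $v$ and observe that every vertex $\partial_iQ$ of $\Gamma_Q$ (there are $q$ of them) meets exactly $\Delta$ endpoints of arcs of $P\cap Q$ coming from intersections of $\partial_iQ$ with the $\alpha$-curves on $\partial P$ carried by the $2$-handle, plus $R_P$ endpoints coming from intersections with $\partial\hat P$; distributing the $\alpha$-endpoints among the labels $\{+1,\dots,+q,-q,\dots,-1\}$ and the $\partial\hat P$-endpoints among the $*$-labels, each interior vertex of $\Gamma_Q$ is incident to exactly $\Delta$ edges whose $\Gamma_P$-label (the first index) equals a fixed value, and in particular $\Delta$ many $v$-edge endpoints at the side where the label is $v$... more carefully, each vertex of $\Gamma_Q$ contributes $\frac{\Delta}{2}$ to the $v$-edge count on each of its two "$v$-sides" when $v$ is one of the $\pm$ labels, so $B_Q^v$ has at least $\frac{\Delta}{2}q$ edge-endpoints at $v$-labels, hence at least $\frac{\Delta}{2}q$ edges counted with the boundary edges, which total at most $\frac{R_P}{2}$ — here one uses that boundary edges of $\Gamma_Q$ correspond to the $R_P$ intersections of $\partial\hat Q$ with $\alpha$ and the parity rule (Lemma~\ref{label}) to pin down how many $v$-edges can be boundary edges. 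This gives a lower bound of roughly $\frac{\Delta}{2}q - \frac{R_P}{2}$ (up to the exact bookkeeping constant) for the number of non-boundary $v$-edges.

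Next I would set up the Euler-characteristic count for $B_Q^v$ viewed as a graph on the sphere (taking $\partial\hat Q$ as an extra vertex, as the excerpt suggests): with $q$ interior vertices, $E$ edges and $F$ disk faces, $V - E + F \ge 2$, so $F \ge E - q + 1$ after accounting for the outer vertex. Every disk face that is not $2$-sided must have at least $3$ sides; by Lemma~\ref{1-sided} there are no $1$-sided faces. Letting $f_2$ be the number of $2$-sided disk faces, the edge–face incidence inequality $2E \ge 3(F - f_2) + 2 f_2 = 3F - f_2$ combines with $F \ge E - q + 1$ to yield $f_2 \ge 3F - 2E \ge 3(E - q + 1) - 2E = E - 3q + 3$. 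Plugging in the lower bound $E \ge \frac{\Delta}{2}q + \frac{R_P}{2}$ from the first step gives $f_2 \ge \frac{\Delta}{2}q + \frac{R_P}{2} - 3q + 3$, which is exactly the claimed bound. I would use the hypothesis $\Delta \ge 6$ precisely to guarantee that $E - 3q + 3 \ge \frac{R_P}{2} + 3 > 0$, so the count is not vacuous, and to ensure the graph $B_Q^v$ is dense enough that the Euler argument bites.

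The main obstacle I expect is the careful bookkeeping in the first step: correctly accounting for which edge-endpoints carry the label $v$, separating boundary edges (those incident to $\partial\hat Q$ or carrying a $*$-label) from interior edges, and making sure no double-counting occurs when an edge has label $v$ at both endpoints — here the parity rule (Lemma~\ref{label}), which forbids label pair $(v,v)$, is what saves the count and lets each $v$-edge be charged cleanly. A secondary subtlety is that $B_Q^v$ may be disconnected or have vertices of valence zero, so the Euler inequality must be applied componentwise (or with the convention that isolated vertices only help the bound), and one must check that faces of $B_Q^v$ that are disk faces of $\Gamma_Q$ are the relevant ones — using Lemma~\ref{Scharlemann} and Lemma~\ref{compress} in the background to know that certain small faces cannot degenerate. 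Once these are handled, the Euler-characteristic estimate is routine and mirrors the proof of Lemma~\ref{3q}.
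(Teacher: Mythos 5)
Your overall strategy --- an exact count of the edges of $B_Q^v$ followed by an Euler-characteristic and face-count estimate --- is the same as the paper's, and your second paragraph is correct: with $F\ge E-q+1$ and $2E\ge 3F-f_2$ one gets $f_2\ge E-3q+3$, exactly as in the paper.

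The gap is in your first step, and it sits precisely where the $\frac{R_P}{2}$ term must come from. You treat the boundary contribution as something to bound above and subtract (``boundary edges \dots total at most $\frac{R_P}{2}$'', giving ``roughly $\frac{\Delta}{2}q-\frac{R_P}{2}$'' non-boundary $v$-edges), and then in the Euler step you invoke $E\ge\frac{\Delta}{2}q+\frac{R_P}{2}$, which your own bookkeeping has not established; as written, your count yields only $E\ge\frac{\Delta}{2}q$, and the conclusion would lose the $\frac{R_P}{2}$ term of the statement. The correct accounting (the paper's) is additive: a fixed signed label $v$ appears exactly $\frac{\Delta}{2}$ times around each of the $q$ interior vertices of $\Gamma_Q$ (not $\frac{\Delta}{2}$ on each of two ``$v$-sides''), and in addition the curve $\partial_{|v|}P$ meets $\partial\hat Q$ in $R_P=\Delta(\alpha,\partial\hat Q)$ points, of which exactly half carry the sign of $v$ (the algebraic intersection number vanishes because $\alpha$ is separating), so the boundary vertex $\partial\hat Q$ contributes a further $\frac{R_P}{2}$ endpoints labelled $v$. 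By the parity rule (Lemma \ref{label}) no edge has label pair $(v,v)$, so each edge of $B_Q^v$ has exactly one endpoint labelled $v$, whence $e=\frac{\Delta}{2}q+\frac{R_P}{2}$ exactly; with this count your Euler argument goes through verbatim. (You are right that $\Delta\ge 6$ is only used to make the resulting bound useful later.)
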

\begin{proof}
Denote the number of edges in $B_Q^v$ by $e$. Then $e=\frac{\Delta}{2}q+\frac{R_P}{2}$. Denote the number of 2-sided disk faces of $B_Q^v$ by $f_1$ and the number of disk faces with at least 3 sides by $f_2$. By Euler characteristic  formula, $q-e+f_1+f_2\ge 1$. Since $2f_1+3f_2\le 2e$, $f_2\le\frac{2e-2f_1}{3}$. Then $ q-e+f_1+\frac{2e-2f_1}{3}\ge 1$. Thus $f_1\ge e-3q+3=\frac{\Delta}{2}q+\frac{R_P}{2}-3q+3$.
\end{proof}

Let $D$ be a 2-sided disk face of $B_Q^v$. See Figure \ref{2}. The edges of $\Gamma_Q$ in $D$ are parallel. Suppose that these edges are incident to two subarcs $z$ and $z^\prime$ of $\partial Q$. When we go from top to bottom along $z$ (resp. from bottom to top along $z^\prime$), the labels appear in the direction of $+1$,$+2$,$\cdots$,$+p$,$-p$,$\cdots$,$-1$,$*$. Let $X(D)$ (resp. $Y(D)$) be the collection of labels in $z$ (resp. $z^\prime$). There are three cases of 2-sided disk face $D$ of $B_Q^v$. In case 1, two labels $v$ are both in $X(D)$. Note that it is the same after rotating the figure by $\pi$ if two labels $v$ are both in $Y(D)$. See Figure \ref{2} (1). In case 2, the top label (resp. lowest label) in $z$ (resp. $z^\prime$) is $v$. See Figure \ref{2} (2). In case 3, the lowest label (resp. top label) in $z$ (resp. $z^\prime$) is $v$. See Figure \ref{2} (3).

We have the following Fact \ref{fact}.

\begin{fact}
\label{fact}
There are no edges in $int D$ with label $v$.
\end{fact}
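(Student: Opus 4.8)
The statement is a direct consequence of the way $B_Q^v$ is defined, so the proof should be short; the plan is simply to unwind the definition of a disk face of $B_Q^v$ and derive a contradiction. Recall that $B_Q^v$ consists of every vertex of $\Gamma_Q$ together with every $v$-edge, and that (by the definition of $x$-edge applied to $\Gamma_Q$) an edge of $\Gamma_Q$ carrying the label $v$ at one of its endpoints is exactly a $v$-edge; thus $B_Q^v$ contains \emph{every} edge of $\Gamma_Q$ that has label $v$ somewhere.

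First I would suppose, for contradiction, that some edge $e$ of $\Gamma_Q$ meets $\mathrm{int}\, D$ and has label $v$ at an endpoint. Since $D$ is a disk face of $B_Q^v$ and $B_Q^v$ contains all vertices of $\Gamma_Q$, the open disk $\mathrm{int}\, D$ contains no vertex of $\Gamma_Q$; consequently both endpoints of $e$ lie on $\partial D$, which is made up of the two boundary edges of $D$ together with two subarcs of vertices of $\Gamma_Q$. Hence $e$ is a genuine edge of $\Gamma_Q$ passing through $\mathrm{int}\, D$. But $e$ has label $v$ at an endpoint, so $e$ is a $v$-edge, and therefore an edge of $B_Q^v$; since a disk face of $B_Q^v$ has interior disjoint from the graph $B_Q^v$, in particular from $e$, this is the required contradiction, and it proves the Fact.

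There is no genuine obstacle here: the only thing worth double-checking is the bookkeeping that any edge of $\Gamma_Q$ meeting $\mathrm{int}\, D$ is forced to be an edge of $B_Q^v$ once it is known to carry the label $v$, and this is immediate because $\mathrm{int}\, D$ contains no vertices and hence no partial edges. In particular the argument is insensitive to which of the three cases of Figure \ref{2} the face $D$ falls into, and does not even use the $2$-sidedness of $D$.
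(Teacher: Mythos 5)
Your proof is correct and coincides with what the paper intends: the Fact is stated without proof precisely because it is this immediate definitional consequence, namely that any edge of $\Gamma_Q$ carrying label $v$ is a $v$-edge, hence lies in the subgraph $B_Q^v$, and so cannot meet the interior of a face of $B_Q^v$. Your bookkeeping (interiors of faces contain no vertices, edges of $\Gamma_Q$ are disjoint, so an edge meeting $\mathrm{int}\,D$ lies in it) is exactly the right justification, and you are right that it needs neither the $2$-sidedness of $D$ nor the case distinction of Figure \ref{2}.
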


\begin{figure}
\centering
\includegraphics[scale=0.34]{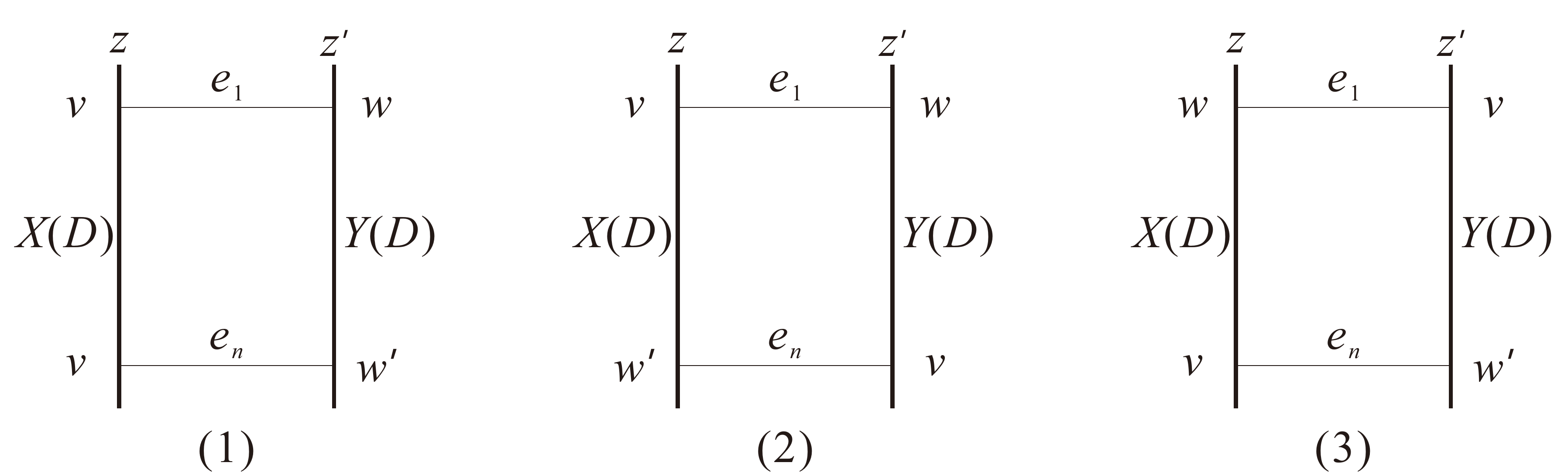}
\caption{}
\label{2}
\end{figure}

If a interior vertex of $\Gamma_P$ is incident to boundary edges or edges with both endpoints in it, then the vertex is \textit{good}. Otherwise the vertex is \textit{bad}.

\begin{lemma}
\label{1}
Suppose that $\Delta\ge 6$. Vertex 1 in $\Gamma_P$ is good.
\end{lemma}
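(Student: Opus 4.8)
The plan is to argue by contradiction. Suppose vertex $1$ of $\Gamma_P$ is bad, so no edge of $\Gamma_P$ incident to vertex $1$ is a boundary edge and no edge of $\Gamma_P$ has both its endpoints at vertex $1$. Since $\Delta\ge 6$ and $q>1$, Lemma \ref{diskface} applied with $v=+1$ gives $\frac{\Delta}{2}q+\frac{R_P}{2}-3q+3\ge 3>0$, so there is at least one $2$-sided disk face $D$ of $B_Q^{+1}$. By the description preceding Fact \ref{fact}, the edges of $\Gamma_Q$ lying in $D$ form a band of mutually parallel edges joining two subarcs $z$ and $z'$ of $\partial Q$; the two extreme edges $e_1,e_2$ of this band are $+1$-edges, and by Fact \ref{fact} no edge in $\operatorname{int} D$ is a $+1$-edge. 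I will produce a contradiction in each of the three cases for $D$ listed before Fact \ref{fact}.

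In Cases $2$ and $3$ exactly one of $z,z'$ carries a $+1$-label, and it lies at one of its endpoints. Tracing the band along $z$ and $z'$ and comparing their lengths (both arcs meet $\partial P$ in the same number of points, one per band edge) forces the band to begin at the $+1$-label on one arc and to terminate at a $*$-label on the other arc; otherwise the next intersection point in the relevant direction would again be labelled $+1$ and would lie in $\operatorname{int} D$, contradicting Fact \ref{fact}. Hence one of $e_1,e_2$ carries the label $*$ at the endpoint that is not on $\partial_1 P$. An edge of $\Gamma_Q$ with a $*$-label at an endpoint is, as an edge of $\Gamma_P$, incident to the vertex $\partial\hat P$, i.e.\ it is a boundary edge of $\Gamma_P$; since its other endpoint lies on $\partial_1 P$, it is a boundary edge incident to vertex $1$. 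This contradicts the assumption that vertex $1$ is bad.

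In Case $1$ both $+1$-labels lie on one arc, say $z$, and they are the two extreme endpoints of the band; since $D$ is a face of $B_Q^{+1}$, no other band edge is a $+1$-edge. Hence, going around the vertex of $\partial Q$ containing $z$ from one $+1$-endpoint to the other, $z$ meets every other label exactly once and the label $+1$ twice, so the band has one more edge than the period of the label pattern of Assumption \ref{assump}. Because all interior vertices of $\Gamma_Q$ have the same valency and the same period of their label pattern, the arc $z'$ meets $\partial P$ in the same number of points and must therefore contain a point labelled $+1$; by the parity rule (Lemma \ref{label}) neither extreme endpoint of $z'$ can be a $+1$-point, as the corresponding edges $e_1,e_2$ already carry $+1$ on the $z$-side, so this $+1$-point lies in $\operatorname{int} z'$ and the edge emanating from it into $D$ is a $+1$-edge in $\operatorname{int} D$, contradicting Fact \ref{fact}. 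This finishes all cases and the proof.

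The step I expect to be the main obstacle is this case analysis of the band of $D$ in the three configurations of Figure \ref{2}: pinning down exactly where the band starts and stops (the $*$-label in Cases $2$ and $3$, the full wrap-around in Case $1$), and verifying that this is forced by $D$ being a face of $B_Q^{+1}$, by Fact \ref{fact}, by the parity rule, and by the uniformity of valency and label pattern around the interior vertices of $\Gamma_Q$; the signs prescribed by Assumption \ref{assump} must be carried through the bookkeeping carefully. The hypothesis $\Delta\ge 6$ enters only through Lemma \ref{diskface}, to guarantee that a face $D$ exists at all; the remainder of the argument is purely combinatorial.
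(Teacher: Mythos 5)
Your Case 1 argument is essentially the paper's (count that $z$ spans a full period from $+1$ to $+1$, then force a $+1$ label into $\operatorname{int}D$ on the $z'$ side and contradict Fact \ref{fact} via the parity rule), and that part is fine modulo some care about the varying number of $*$-labels per period. But in Cases 2 and 3 there is a genuine gap: your claim that the band is ``forced to terminate at a $*$-label on the other arc, otherwise the next intersection point would again be labelled $+1$ and lie in $\operatorname{int}D$'' is not true. The band simply ends; the point that would carry the next $+1$ lies outside $D$, so no contradiction with Fact \ref{fact} arises. Concretely, in the configuration of Figure \ref{2}(2) a band of $2k$ parallel edges whose endpoint labels read $+1,+2,\dots,+2k$ along both $z$ and $z'$ (with $2k\le p$) satisfies Fact \ref{fact}, Lemma \ref{label} and Assumption \ref{assump}, contains no boundary label and no loop at vertex $1$ -- so your purely combinatorial tracing argument cannot rule it out, and under your ``bad'' hypothesis you obtain no contradiction.

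What is missing is exactly the topological input the paper uses for the no-boundary-label subcases: by Lemma \ref{Scharlemann} a disk face of $B_Q^{+1}$ with no boundary labels contains a virtual Scharlemann cycle; by Lemma \ref{compress} it cannot be a Scharlemann cycle, so its label pair is $(+1,-1)$ or $(+p,-p)$ (in my example above the middle bigon would be a genuine Scharlemann cycle, which is what actually kills it); and then Lemma \ref{opposite} propagates ``opposite labels'' through the parallel family, so the extreme edge $e_1$ has label pair $(+1,-1)$, i.e.\ it is an edge of $\Gamma_P$ with both endpoints at vertex $1$. Note this is not a contradiction with the label pattern but precisely the second way a vertex can be good, which your argument never produces (and which your reading ``exactly one of $z,z'$ carries a $+1$-label'' in Cases 2 and 3 obscures: each of $z$ and $z'$ carries one $+1$, at opposite extremes). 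A similar Scharlemann-cycle step, applied to the nested face of $B_Q^{-1}$, is needed in Case 3. Without Lemmas \ref{Scharlemann}, \ref{compress} and \ref{opposite} the lemma cannot be proved by label bookkeeping alone, so the proposal as written does not close Cases 2 and 3.
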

\begin{proof}
Let $D$ be a 2-sided disk face of $B_Q^{+1}$. Then $D$ is as in Figure \ref{firstfig}. Denote the two edges in $\partial D$ by $e_1$ and $e_n$ respectively. If one of $w$ and $w^\prime$ is a boundary label, then this lemma holds. Thus we assume that both $w$ and $w^\prime$ are not boundary labels.

\begin{figure}
\centering
\includegraphics[scale=0.34]{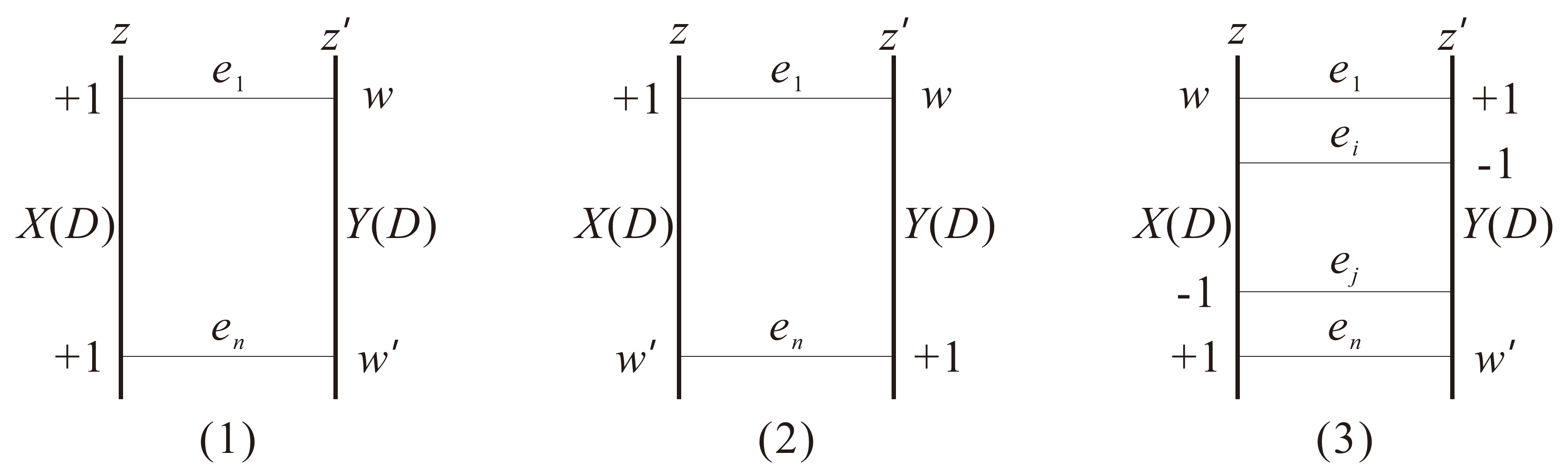}
\caption{}
\label{firstfig}
\end{figure}

Case 1: $D$ is as in Figure \ref{firstfig} (1).

If there are boundary labels in $Y(D)$, then there is label $+1$ in $Y(D)$ by Assumption \ref{assump}. If there are no boundary labels in $Y(D)$, then there is label $+1$ in $Y(D)$ since there are at least $2p+1$ edges in $D$ . By Lemma \ref{label}, $w, w^\prime\ne +1$. Then there is an edge with label $+1$ in $int D$, contradicting with Fact \ref{fact}.

Case 2: $D$ is as in Figure \ref{firstfig} (2).

If there are boundary labels in $X(D)$ or $Y(D)$, then there is an edge with label $+1$ in $int D$ by Assumption \ref{assump} and Lemma \ref{label}, contradicting with Fact \ref{fact}. Thus there are no boundary labels in both $X(D)$ and $Y(D)$. By Lemma \ref{Scharlemann} and Lemma \ref{compress}, there is a virtual Scharlemann cycle $\Sigma$ with label pair $(+1,-1)$ or $(+p,-p)$ in $D$. By Lemma \ref{opposite}, $e_1$ has label pair $(+1,-1)$. Then $e_1$ is an edge with both endpoints in vertex 1 in $\Gamma_P$.

Case 3: $D$ is as in Figure \ref{firstfig} (3).

There must be label $-1$ in both $X(D)$ and $Y(D)$ since $w$ and $w^\prime$ are not boundary labels. Assume that edge $e_i$ (resp. $e_j$) has label $-1$ in $Y(D)$ (resp. $X(D)$). If $e_j$ is higher than $e_i$, $e_i$ and $e_j$ must have boundary labels by Assumption \ref{assump}. Then $e_i$ and $e_j$ are boundary edges incident to vertex 1 in $\Gamma_P$, which means that vertex 1 is good. Now assume that $e_i$ is higher than $e_j$. See Figure \ref{firstfig} (3). $e_i$ and $e_j$ bound a 2-sided disk face $D^\prime$ of $B_Q^{-1}$. By Fact \ref{fact}, the edges in $D^\prime$ have no boundary labels. Then by Lemma \ref{Scharlemann} and Lemma \ref{compress}, there is a virtual Scharlemann cycle $\Sigma$ with label pair $(+1,-1)$ or $(+p,-p)$ in $D^\prime$. By Lemma \ref{opposite}, $e_i$ has label pair $(+1,-1)$, contradicting with Fact \ref{fact}.

Thus vertex 1 in $\Gamma_P$ is good.
\end{proof}

\begin{lemma}
\label{4edges} For a bad vertex $v$ ($v>1$) of $\Gamma_P$, let $D$ be a 2-sided disk face of $B_Q^{+v}$. Then there are labels $+1$ and $-1$ in both $X(D)$ and $Y(D)$.
\end{lemma}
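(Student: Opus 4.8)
The plan is to mimic the proof of Lemma \ref{1}, replacing the label $+1$ by $+v$ and exploiting at the end that $v$ is \emph{bad} rather than concluding that it is good. Denote by $e_1$ and $e_n$ the two $+v$-edges on $\partial D$, so that the remaining edges $e_2,\dots,e_{n-1}$ of $\Gamma_Q$ lying in $D$ are all parallel to $e_1$ as in Figure \ref{2}; let $w$ (resp. $w'$) be the label carried by $e_1$ (resp. $e_n$) at the endpoint which is \emph{not} its distinguished $+v$-endpoint. The first thing to record is what ``$v$ bad'' means in terms of labels: since vertex $v$ of $\Gamma_P$ is incident neither to a boundary edge nor to a loop, no edge of $\Gamma_Q$ has label pair $(\pm v,*)$ or $(+v,-v)$; combined with the parity rule (Lemma \ref{label}) this forces $w,w'\notin\{*,+v,-v\}$.

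Next I would locate $X(D)$ and $Y(D)$ as blocks in the cyclic order of labels around a vertex of $\Gamma_Q$ (Assumption \ref{assump}), using Fact \ref{fact} to bound how far they can wrap. In the case of Figure \ref{2}(1) --- both $+v$-labels lying on the arc $z$ --- the arc $z$ must run from $+v$ once around and back to $+v$, so $X(D)$ consists of one complete cycle of labels; then $Y(D)$, having the same number of labels as $X(D)$, also meets every label, in particular $+v$. Since by the parity rule neither $e_1$ nor $e_n$ carries label $+v$ at its $z'$-endpoint, some \emph{interior} edge $e_k$ ($1<k<n$) would then be a $+v$-edge lying in the interior of $D$, contradicting Fact \ref{fact}; so this case does not occur. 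In the cases of Figure \ref{2}(2) and (3) the two $+v$-labels lie on opposite arcs, and a direct count in the cyclic order (again using Fact \ref{fact} to see that $X(D)$ stays within one cycle of labels) forces $w=w'$ and $X(D)=Y(D)$, a single block of consecutive labels whose two end-labels $+v$ and $w$ are both different from $*$.

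It then remains to show, in these last two cases, that the block $X(D)=Y(D)$ contains both $+1$ and $-1$. First I would show it contains a boundary label: otherwise $D$ would be a disk face of $B_Q^{+v}$ with no boundary labels, hence by Lemma \ref{Scharlemann} would contain a virtual Scharlemann cycle, which by Lemma \ref{compress} must have label pair of the form $(i,-i)$; then by Lemma \ref{opposite} every edge of the parallel family $e_1,\dots,e_n$ would have opposite labels, in particular $w=-v$, contradicting the first step. Hence $*\in X(D)=Y(D)$. Since within one cycle the boundary labels form a single consecutive run flanked by $-1$ on one side and $+1$ on the other, and since $X(D)=Y(D)$ contains a $*$ but neither of its end-labels is $*$, the block must contain that whole run together with the label immediately on each side; therefore $+1,-1\in X(D)$ and $+1,-1\in Y(D)$.

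The step I expect to be the main obstacle is the bookkeeping in the second paragraph: identifying precisely which block of the cyclic label sequence $X(D)$ and $Y(D)$ are in each picture of Figure \ref{2}, checking that Fact \ref{fact} genuinely rules out the first picture, and verifying $w=w'$ and $X(D)=Y(D)$ in the other two. Once that is in place, the invocation of Lemmas \ref{Scharlemann}, \ref{compress} and \ref{opposite} is exactly parallel to their use in the proof of Lemma \ref{1}.
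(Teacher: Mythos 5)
Your first and third paragraphs are fine and match the paper: badness plus the parity rule gives $w,w'\notin\{*,+v,-v\}$, and the combination of Lemmas \ref{Scharlemann}, \ref{compress} and \ref{opposite} correctly rules out the possibility that $D$ contains no boundary label at all (otherwise $e_1$ would have label pair $(+v,-v)$, i.e.\ a loop at $v$). The gap is exactly where you predicted it, in the bookkeeping of the second paragraph, and it comes from treating the labels around a vertex of $\Gamma_Q$ as one rigid cycle that looks the same along $z$ and along $z'$. In fact each signed label occurs $\Delta/2$ times around a vertex of $\Gamma_Q$, so the pattern of Assumption \ref{assump} repeats, and the blocks of $*$'s sitting between $-1$ and $+1$ can have different lengths in different repetitions; moreover $z$ and $z'$ may lie on different vertices with different numbers of $*$'s. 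Consequently ``same number of labels'' does not imply ``same set of labels''. In the case of Figure \ref{2}(1) your conclusion that $Y(D)$ must contain $+v$ fails: $X(D)$ is one full period $+v,\dots,+p,-p,\dots,-1,*,\dots,*,+1,\dots,+v$, but if $Y(D)$ runs through a longer $*$-block it can have the same cardinality while avoiding $+v$ entirely. So you cannot discard this case, and as written you prove nothing in it; the paper instead proves the conclusion there (if $Y(D)$ had no $*$, its $\ge 2p+1$ labels would force a $+v$ in the interior of $D$, contradicting Fact \ref{fact}; hence $Y(D)$ contains a $*$, and since its two end labels $w,w'$ are not $*$, it contains a whole $*$-block together with the flanking $-1$ and $+1$).

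The same uniformity assumption invalidates ``$w=w'$ and $X(D)=Y(D)$'' in the cases of Figure \ref{2}(2) and (3): the two runs of equal length starting at the two $+v$-labels need not coincide if the $*$-blocks they meet have different sizes. With that identification gone, your final step only yields $+1,-1$ in whichever of $X(D)$, $Y(D)$ actually contains a boundary label, and a separate argument is needed for the other arc. This is what the paper supplies: once, say, $X(D)$ contains a full $*$-block with its flanking $-1$ and $+1$, one gets a lower bound on the number of edges, $n\ge 2p-v+3$ in case (2) (resp.\ $n\ge v+2$ in case (3)); walking along the other arc from its $+v$-end, the label $-1$ (resp.\ $+1$) is reached within the first $2p-v+1$ (resp.\ $v$) labels because no $*$'s intervene on that stretch, and since the far end label is not $*$ the run must clear the next $*$-block and therefore also contains $+1$ (resp.\ $-1$). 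So the skeleton of your argument is the paper's, but the case (1) dismissal and the identification $X(D)=Y(D)$ are genuine gaps that the paper's one-sided counting is specifically designed to avoid.
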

\begin{proof}
Since $v$ is bad, then the 2-sided disk face of $B_Q^{+v}$ is as in Figure \ref{2}, where $w$ and $w^\prime$ are not boundary labels.

Case 1: $D$ is as in Figure \ref{2} (1).

By Assumption \ref{assump} there are labels $+1$ and $-1$ in $X(D)$.

Suppose that there are no boundary labels in $Y(D)$. Since there are at least $2p+1$ edges in $D$, then there is label $+v$ in $Y(D)$, contradicting with Fact \ref{fact} or Lemma \ref{label}. Thus there are boundary labels in $Y(D)$. Then there are labels $+1$ and $-1$ in $Y(D)$ by Assumption \ref{assump}.

Case 2: $D$ is as in Figure \ref{2} (2).

Suppose that there are no boundary labels in both $X(D)$ and $Y(D)$. By Lemma \ref{Scharlemann} and Lemma \ref{compress}, there is a virtual Scharlemann cycle with label pair $(+1,-1)$ or $(+p,-p)$ in $D$. By Lemma \ref{opposite}, $e_1$ has label pair $(+v,-v)$. Then $e_1$ is an edge with both endpoints in vertex $v$ in $\Gamma_P$, contradicting that vertex $v$ is bad.

Now there are boundary labels in $X(D)$ or $Y(D)$. Without loss of generality, we assume that there are boundary labels in $X(D)$. Thus there are labels $+1$ and $-1$ in $X(D)$ by Assumption \ref{assump}. Then there are at least $(2p-v+3)$ edges in $D$. See Figure \ref{9} (1). By counting the labels in $Y(D)$ from bottom to top, label $-1$ must appear in $Y(D)$. Since $w$ is not $*$, label $+1$ must appear in $Y(D)$.

Case 3: $D$ is as in Figure \ref{2} (3).

Suppose that there are no boundary labels in both $X(D)$ and $Y(D)$. Similar to the proof in Case 2, we get a contradiction. Now suppose that there are boundary labels in $X(D)$. Thus there are labels $+1$ and $-1$ in $X(D)$. Then there are at least $(v+2)$ edges in $D$. See Figure \ref{9} (2). By a similar argument as in Case (2), labels $+1$ and $-1$ must appear in $Y(D)$.
\end{proof}

\begin{figure}
\centering
\includegraphics[scale=0.35]{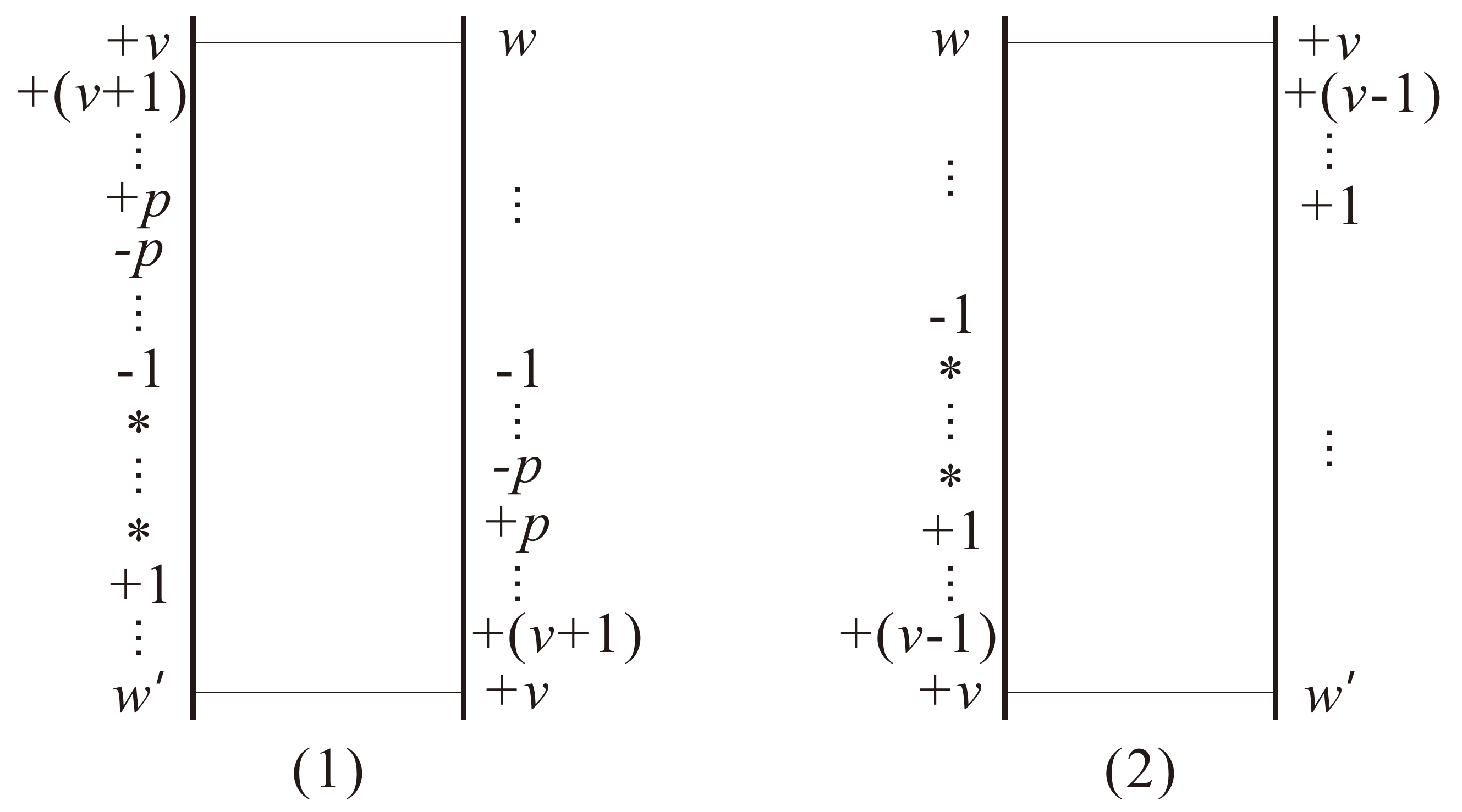}
\caption{}
\label{9}
\end{figure}

\begin{lemma}
\label{parallel}
Suppose $v$ ($v>1$) is a bad vertex of $\Gamma_P$. Then any two 2-sided disk faces of $B_Q^{+v}$ are not parallel.
\end{lemma}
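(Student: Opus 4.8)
\emph{Plan of proof.} The plan is to argue by contradiction. Suppose $D$ and $D'$ are parallel $2$-sided disk faces of $B_Q^{+v}$ for a bad vertex $v>1$ of $\Gamma_P$. First I would unpack the hypothesis: $D$ and $D'$, together with the product region between them, are spanned in $\Gamma_Q$ by a single block $\mathcal E=\{g_1,\dots,g_N\}$ of mutually parallel edges joining the same two vertices of $\Gamma_Q$, meeting their vertex circles along arcs $z$ and $z'$; among the edges of $\mathcal E$, exactly the (at most four) edges bounding $D$ and $D'$ are $+v$-edges, and these cut $\mathcal E$ into the sub-block giving $D$, the sub-block giving $D'$, and possibly a middle sub-block with no $+v$-edge in its interior. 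By Fact \ref{fact} no interior edge of $D$ or of $D'$ carries the label $v$, and by Lemma \ref{4edges} both $+1$ and $-1$ occur in each of $X(D),Y(D),X(D'),Y(D')$.

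Next I would record the observation already contained in the proof of Lemma \ref{4edges}: for a bad vertex $v$, every $2$-sided disk face of $B_Q^{+v}$ must carry a boundary label. Indeed, if the edges of such a face had no boundary label, then by (the $\Gamma_Q$-analogue of) Lemma \ref{Scharlemann} and by Lemma \ref{compress} the face would contain a virtual Scharlemann cycle with label pair $(+1,-1)$ or $(+p,-p)$, and Lemma \ref{opposite} would then force a bounding $+v$-edge to have label pair $(+v,-v)$, i.e.\ to be an edge of $\Gamma_P$ joining vertex $v$ to itself, so that $v$ would be good. Applying this to $D$ and to $D'$ and combining with Lemma \ref{4edges} and Assumption \ref{assump}, the boundary labels of each of $D$ and $D'$ are confined to the cyclic block $-1,*,\dots,*,+1$ sitting inside its $X(\cdot)$ or $Y(\cdot)$.

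The heart of the argument is then to play $D$ and $D'$ against each other, and this is where having \emph{two} parallel faces is essential. Running along $z$ from the block of $D$, through the middle block, into the block of $D'$, I would locate a sub-block $D_0$ of consecutive edges of $\mathcal E$ — cut off by a label-$(-1)$ edge of $D$ and a label-$(-1)$ edge of $D'$, as in Case~3 of the proof of Lemma \ref{1} — that is a $2$-sided disk face of $B_Q^{-1}$ (or of $B_Q^{+1}$) \emph{free of boundary labels}, the $*$'s of $D$ lying on one side of the cut and those of $D'$ on the other; if no such clean sub-block could be found, then by Assumption \ref{assump} and Lemma \ref{label} one of the four $+v$-edges bounding $D$ or $D'$ would be a boundary edge incident to vertex $v$, again making $v$ good. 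Given $D_0$, Lemma \ref{Scharlemann} and Lemma \ref{compress} supply a virtual Scharlemann cycle in $D_0$ with label pair $(+1,-1)$ or $(+p,-p)$, Lemma \ref{opposite} propagates opposite labels over all of $D_0$, and a short reading of the cyclic label order then produces the final contradiction in one of three shapes: a $+v$-edge of $\mathcal E$ forced to join vertex $v$ to itself (so $v$ good); or an edge carrying label $v$ in the interior of $D$ or $D'$ (against Fact \ref{fact}); or a pair of edges parallel in both $\Gamma_P$ and $\Gamma_Q$ (against Lemma \ref{paralleledge}). The step I expect to be the main obstacle is precisely this final bookkeeping — organizing the handful of cases according to the Figure \ref{2}-types of $D$ and of $D'$ and to where the labels $\pm1$ and the $*$-block land along $z$ and $z'$, so that in each case the clean sub-block $D_0$ genuinely straddles a $+v$-edge of $\mathcal E$, which is what makes Lemma \ref{opposite} bite.
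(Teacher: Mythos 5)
Your overall strategy (reduce to two faces in one band of parallel edges and force a contradiction from the label pattern) is in the right spirit, but the proposal has a genuine gap exactly where you predict it: the existence of the ``clean sub-block'' $D_0$ is asserted, not proved, and the dichotomy you offer in its place is unjustified. Concretely: if you take as $D_0$ the block cut off by a $(-1)$-labelled edge of $D$ and a $(-1)$-labelled edge of $D'$, then along the vertex arc carrying those two $-1$'s the labels between them run forward through $-1,*,\dots,*,+1,\dots$, so the intervening edges will in general pick up boundary labels there (and possibly further $\pm1$'s on the other arc), so $D_0$ need be neither free of boundary labels nor a genuine $2$-sided face of $B_Q^{\pm1}$. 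Your fallback claim --- that if no clean sub-block exists then one of the $+v$-edges bounding $D$ or $D'$ must be a boundary edge at vertex $v$ --- does not follow: the obstruction lives on \emph{interior} edges of $D$, $D'$ or the middle block, and says nothing about the second endpoints of the bounding $+v$-edges. In addition, your opening assumption that the only $+v$-edges of $\mathcal{E}$ are the (at most four) edges bounding $D$ and $D'$ is not part of the hypothesis; it requires first replacing $D'$ by a face adjacent to $D$ (this reduction is exactly the second half of the paper's proof, via Figure \ref{5}). Since the ``final bookkeeping'' is explicitly left undone and the intermediate claims carrying it are not established, the proposal does not yet constitute a proof.

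For comparison, the paper's argument needs none of the Scharlemann-cycle machinery for this lemma. After reducing to two faces $D_1,D_2$ sharing a common edge (with, say, $X(D_1)\cap X(D_2)=\{+v\}$), Lemma \ref{4edges} already supplies a label $-1$ in $Y(D_1)$ and a label $+1$ in $Y(D_2)$; by Assumption \ref{assump} the labels read along $z'$ from that $+1$ to that $-1$ must pass through $+v$, and by Lemma \ref{label} the shared edge cannot carry that $+v$, so some edge interior to $D_1$ or $D_2$ has label $+v$, contradicting Fact \ref{fact}. In other words, the opposite-signed labels guaranteed by Lemma \ref{4edges} on the two sides of the shared edge do directly what you were trying to make Lemma \ref{opposite} do via a virtual Scharlemann cycle; if you want to salvage your approach, replace the $D_0$ construction by this one-line use of the cyclic order.
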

\begin{proof}
We first prove that any two 2-sided disk faces $D_1$ and $D_2$ of $B_Q^{+v}$ cannot share a common edge.
Suppose not. See Figure \ref{3}. Without loss of generality, assume that $X(D_1)\cap X(D_2)=\{+v\}$. By Lemma \ref{4edges}, there is an edge $e_1$ in $D_1$ with label $-1$ in $Y(D_1)$ and an edge $e_2$ in $D_2$ with label $+1$ in $Y(D_2)$. By Assumption \ref{assump}, there is an edge $e_3$ between $e_1$ and $e_2$ with label $+v$ in $Y(D_1)\cup Y(D_2)$. By Lemma \ref{label}, $w\ne +v$. Thus $e_3$ is in $int D_1$ or $int D_2$, contradicting with Fact \ref{fact}.

\begin{figure}
\centering
\includegraphics[scale=0.35]{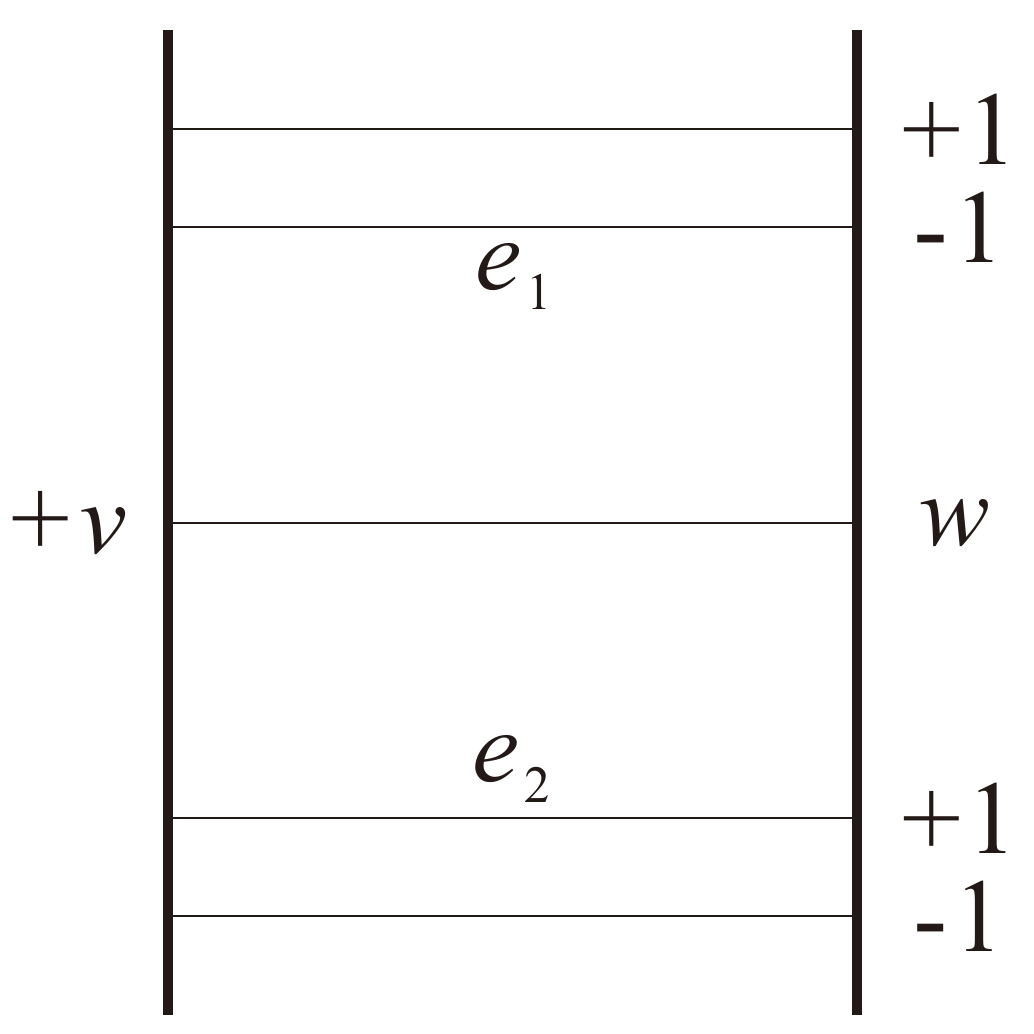}
\caption{}
\label{3}
\end{figure}

Now suppose that there are two parallel but not adjacent 2-sided disk faces $D_1$ and $D_2$ of $B_Q^{+v}$. See Figure \ref{5}. There must be another 2-sided disk face $D_3$ which shares a common edge with $D_2$, a contradiction. Hence this lemma holds.

\begin{figure}
\centering
\includegraphics[scale=0.35]{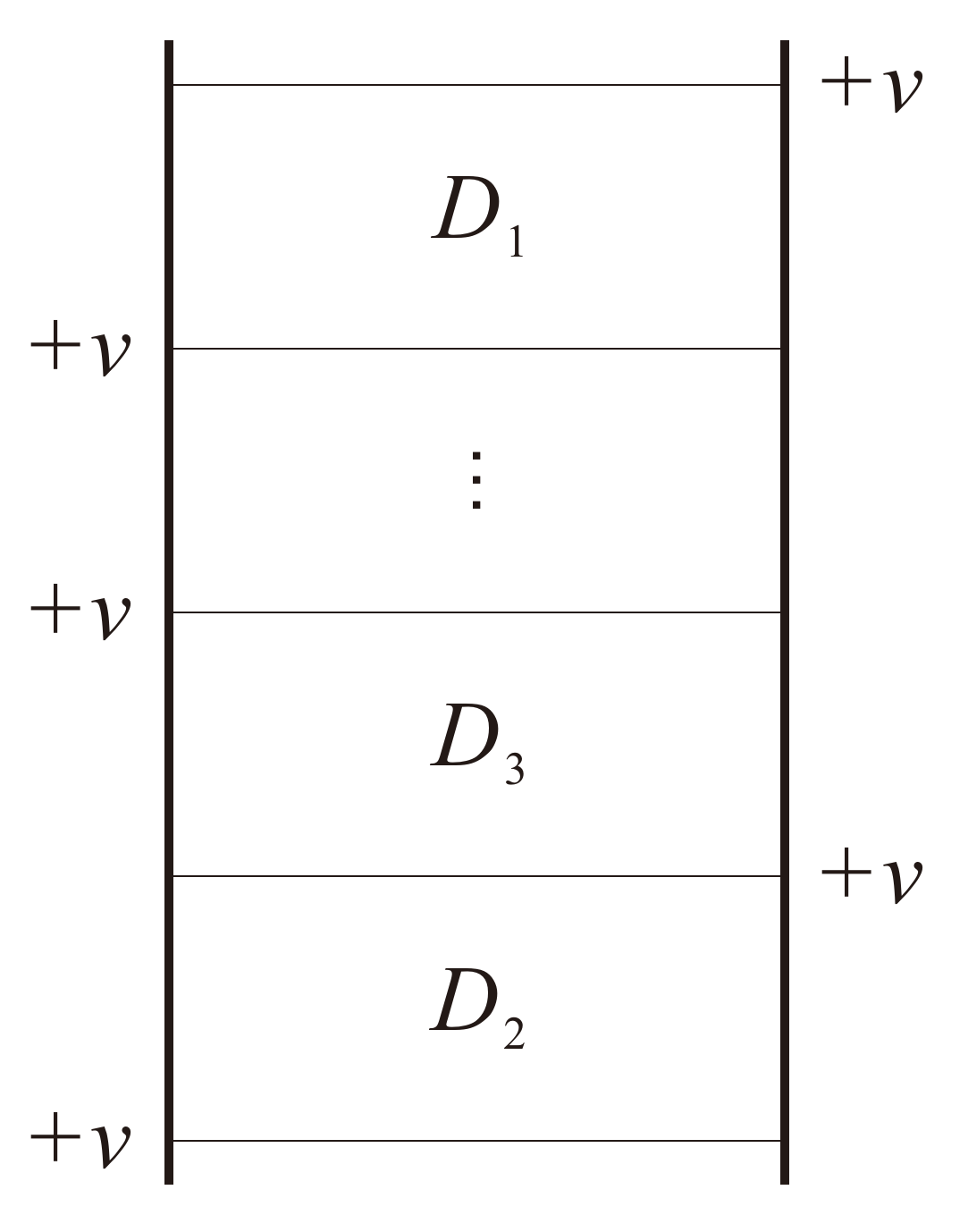}
\caption{}
\label{5}
\end{figure}

\end{proof}

\begin{lemma}
\label{good}
Suppose that $\Delta\ge 10$. Then each interior vertex in $\Gamma_P$ is good.
\end{lemma}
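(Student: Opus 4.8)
\textbf{Proof plan for Lemma \ref{good}.}
The idea is to show that if some interior vertex $v>1$ of $\Gamma_P$ is bad, then $B_Q^{+v}$ is forced to carry so many 2-sided disk faces that, after Lemma \ref{parallel} rules out parallelism among them, one obtains more than $q$ mutually non-parallel 2-sided disk faces attached along the vertices of $\Gamma_Q$, which is impossible on a sphere of the given genus. First I would invoke Lemma \ref{diskface}: for the label $v$ we get at least $\frac{\Delta}{2}q+\frac{R_P}{2}-3q+3$ 2-sided disk faces of $B_Q^{+v}$. Since $\Delta\ge 10$ this quantity is at least $2q+\frac{R_P}{2}+3$, comfortably larger than $q$. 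By Lemma \ref{parallel}, since $v$ is bad, no two of these disk faces are parallel in $\Gamma_Q$.

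Next I would turn the collection of non-parallel 2-sided disk faces into a combinatorial contradiction inside $\hat Q$. Each such face $D$ is a bigon bounded by two $+v$-edges of $\Gamma_Q$ meeting two subarcs of $\partial Q$; think of it as an edge in a reduced graph $\bar\Gamma$ on $\hat Q$ whose vertices are the vertices of $\Gamma_Q$ (that is, the components of $\partial Q$, together with $\partial\hat Q$). Because the faces are pairwise non-parallel and by Fact \ref{fact} carry no interior $+v$-edges, distinct faces give distinct edges of $\bar\Gamma$, and $\bar\Gamma$ has no parallel edges. A sphere (or disk) graph on $q+1$ vertices with no parallel edges and no trivial loops has at most $3(q+1)-6=3q-3$ edges; but we have produced at least $2q+\frac{R_P}{2}+3$ of them, and combined with the constraints coming from Lemma \ref{4edges} (each such face sees both $+1$ and $-1$ on each side, which pins down where its bounding edges must attach) one pushes the count above $3q-3$. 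This is the step where the precise bookkeeping lives: one has to be careful that the bigons really do descend to genuinely distinct, non-parallel edges and that loops at a single vertex are excluded, using Lemma \ref{label} and Assumption \ref{assump} to control the labels at the four corners of each bigon.

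The main obstacle I anticipate is exactly this translation: showing that ``many non-parallel 2-sided disk faces of $B_Q^{+v}$'' forces a genuine Euler-characteristic violation in $\hat Q$, rather than merely a crowded picture. One must handle the role of $\partial\hat Q$ carefully — some of these bigons may have corners on the boundary vertex $*$, and such faces behave differently — and one must make sure the count $2q+\frac{R_P}{2}+3$ versus $3q-3$ actually closes, which forces the hypothesis $\Delta\ge 10$ (the weaker bound $\Delta\ge 6$ used in the earlier lemmas is not enough here). A clean way to finish is: once the bigons give a reduced graph on $\hat Q$ in which every vertex meets a boundary edge (here using that the $+v$-edges all have an endpoint on vertex $v$, which is an interior vertex), apply Lemma \ref{valency} to extract a vertex of valency at most $3$ on a single boundary edge, and derive a contradiction with Lemma \ref{4edges}, which says each of these faces forces at least four distinct labels $\{+1,-1\}$ on both sides and hence a higher local valency at the relevant vertex of $\Gamma_Q$. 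I would therefore organize the proof as: (1) count faces via Lemma \ref{diskface}; (2) use Lemma \ref{parallel} to get non-parallelism; (3) assemble the reduced graph and apply Lemma \ref{valency}; (4) contradict Lemma \ref{4edges}. Step (3)–(4) is the crux.
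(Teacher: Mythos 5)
Your steps (1) and (2) coincide with the paper's opening moves: Lemma \ref{diskface} gives at least $\frac{\Delta}{2}q+\frac{R_P}{2}-3q+3\ge 2q+\frac{R_P}{2}+3$ two-sided disk faces of $B_Q^{+v}$, and Lemma \ref{parallel} says no two of them are parallel. But your steps (3)--(4) have a genuine gap. First, the pure Euler-characteristic count does not close: a graph on the disk $\hat Q$ with $q+1$ vertices, no loops and no parallel edges can have up to about $3q-3$ edges, and $2q+\frac{R_P}{2}+3$ is \emph{not} in general larger than $3q-3$ (that would need $q<\frac{R_P}{2}+6$, which is not known). You sense this and defer to ``bookkeeping with Lemma \ref{4edges},'' but the fix is not a sharper count. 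In the paper the counting argument is only used to show that the bigon graph $D_Q^{+v}$, having no 2-sided faces (Lemma \ref{parallel}), must have a 3-sided face: if all faces had at least 4 sides one would get $e\le 2q-2$, contradicting $e\ge 2q+\frac{R_P}{2}+3$. The real work, entirely missing from your plan, is the analysis of that triangular face $D$ bounded by bigons $D_1,D_2,D_3$: Fact \ref{fact} and Lemma \ref{parallel} exclude interior $+v$-edges, Lemma \ref{4edges} together with Lemma \ref{label} pins down that exactly one corner label in each corner pair is $+v$, and then one rules out interior labels $+1$, $-1$ and $*$ inside $D$, so that $D$ is a face of $B_Q^{+v}$ with no boundary labels. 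Lemmas \ref{Scharlemann}, \ref{compress} and \ref{opposite} then force an edge with label pair $(+v,-v)$, i.e.\ a loop at vertex $v$ in $\Gamma_P$, contradicting that $v$ is bad.

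Second, your proposed finish via Lemma \ref{valency} would fail even as a fallback: that lemma requires every vertex of the graph to belong to a boundary edge, and nothing guarantees that every vertex of $\Gamma_Q$ meets a bigon with a corner on $\partial\hat Q$. Your justification --- ``the $+v$-edges all have an endpoint on vertex $v$, which is an interior vertex'' --- conflates the two graphs: $+v$ is a \emph{label} on edges of $\Gamma_Q$ referring to the vertex $v$ of $\Gamma_P$; in $\Gamma_Q$ these edges can be incident to arbitrary vertices, so this gives no control on valencies or boundary incidences in $\hat Q$. Likewise Lemma \ref{4edges} does not bound valencies of $\Gamma_Q$-vertices; its role is to force the labels $+1$ and $-1$ onto both sides of each bigon so that the triangle-face/Scharlemann-cycle argument can run. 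So the decisive idea of the proof --- locating a 3-sided face of the bigon graph and extracting a virtual Scharlemann cycle from it --- is absent from your proposal.
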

\begin{proof}
Suppose otherwise that there is a bad vertex $v$ of $\Gamma_P$. By Lemma \ref{1}, $v>1$. By taking each 2-sided disk face of $B_Q^{+v}$ as an edge, we have a graph $D_Q^{+v}$. Denote the number of edges in $D_Q^{+v}$ by $e$ and the number of disk faces by $f$. By Lemma \ref{diskface}, $e\ge\frac{\Delta}{2}q+\frac{R_P}{2}-3q+3\ge 5q+\frac{R_P}{2}-3q+3=2q+\frac{R_P}{2}+3$. By Lemma \ref{parallel}, there are no 2-sided disk faces of $D_Q^{+v}$.

Suppose that there are no 3-sided disk faces of $D_Q^{+v}$. Then $4f\le 2e$, that is $f\le\frac{e}{2}$. By Euler characteristic formula, $q-e+f\ge 1$, $e-q+1\le f\le\frac{e}{2}$. Hence $e\le 2q-2$, a contradiction.

Now suppose that $D$ is a 3-sided disk face of $D_Q^{+v}$ bounded by 2-sided disk faces $D_1$, $D_2$ and $D_3$. Suppose that $z$, $z^\prime$ and $z^{\prime\prime}$ are three subarcs of $\partial Q$ which contain the endpoints of all the edges in $D$, $D_1$, $D_2$ and $D_3$. See Figure \ref{6}. Suppose that there is an edge with label $+v$ in $int D$. Then there are two 2-sided disk faces of $B_Q^{+v}$ sharing a common edge, contradicting with Lemma \ref{parallel}. Thus there are no edges in $int D$ with label $+v$. Then $D$ is a 3-sided disk face of $B_Q^{+v}$ bounded by edges $e_1$, $e_2$ and $e_3$.

\begin{figure}
\centering
\includegraphics[scale=0.4]{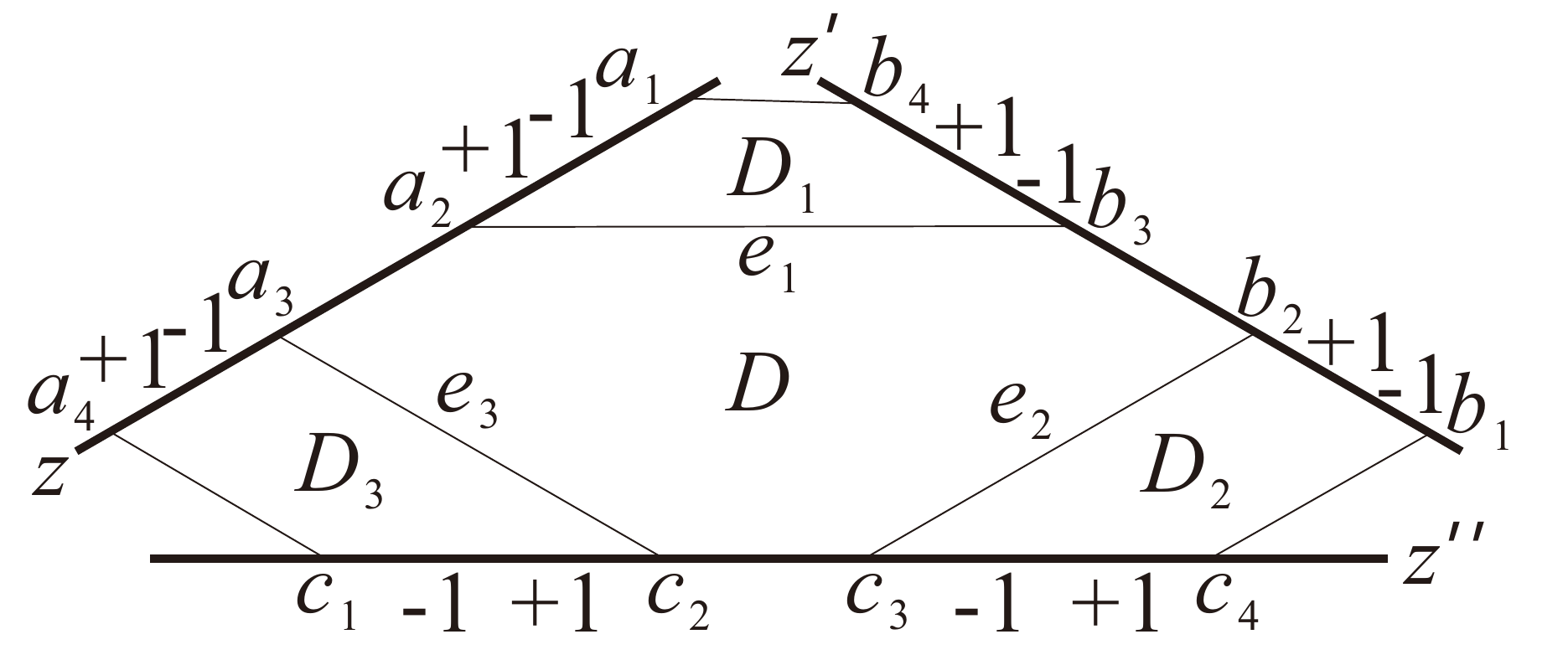}
\caption{}
\label{6}
\end{figure}

By Lemma \ref{4edges}, there are labels $+1$ and $-1$ in both $X(D_i)$ and $Y(D_i)$, $i=1,2,3$. Then at least one of $a_2$ (resp. $b_2$, $c_2$) and $a_3$ (resp. $b_3$, $c_3$) is $+v$ since there are no edges with label $+v$ in $int D$ and $int D_i$, $i=1,2,3$. We claim that exactly one of $a_2$ and $a_3$ is $+v$. Otherwise, both $a_2$ and $a_3$ are $+v$. By Lemma \ref{label}, $b_3, c_2\ne +v$. Thus both $b_2$ and $c_3$ are $+v$, contradicting with Lemma \ref{label}. Similarly, exactly one of $b_2$ (resp. $c_2$) and $b_3$ (resp. $c_3$) is $+v$.

Suppose that there is an edge in $int D$ with label $+1$ between $a_2$ and $a_3$. Then there are three labels $+1$ in $z$. By Assumption \ref{assump}, there are two labels $+v$ between $a_1$ and $a_4$. Then both $a_2$ and $a_3$ are labels $+v$, which contradicts with the last paragraph. Thus no edges in $int D$ have label $+1$ between $a_2$ and $a_3$. Similarly, no edges in $int D$ have label $-1$ between $a_2$ and $a_3$. By the same argument, no edges in $int D$ have label $+1$ or $-1$ between $b_2$ and $b_3$ and between $c_2$ and $c_3$.

Suppose that there is a boundary label $*$ between $a_2$ and $a_3$. Since vertex $v$ is bad, $a_i$, $b_i$ and $c_i$ are not boundary label $*$, $i=1,2,3,4$. Then $a_2$ is $-1$ and $a_3$ is $+1$, contradicting that exactly one of $a_2$ and $a_3$ is $+v$. Thus there are no edges in $int D$ with boundary label $*$ between $a_2$ and $a_3$. The same is true for $b_2$, $b_3$ and $c_2$, $c_3$.

Now $D$ is a disk face of $B_Q^{+v}$ which contains no boundary labels. By Lemma \ref{Scharlemann} and lemma \ref{compress}, there is a virtual Scharlemann cycle $\Sigma$ with label pair $(+1,-1)$ or $(+p,-p)$ in $D$. By Lemma \ref{opposite}, $e_1$ has label pair $(+v,-v)$. Then $e_1$ is an edge with two endpoints in vertex $v$ in $\Gamma_P$, a contradiction. Thus each vertex of $\Gamma_P$ is good.
\end{proof}

{\bfseries The proof of Theorem \ref{main}.}

 Suppose that $\Delta\ge 10$. By Lemma \ref{good}, each vertex of $\Gamma_P$ is good, i.e, it incident to boundary edges or edges with both endpoints in it. Suppose that there is a vertex $v$ incident to an edge $e$ with both endpoints in it. Then $e$ bounds a disk $D$ in $\hat P$. If there is a vertex $v^\prime$ in $D$, then $v'$ also has an edge $e'$ with both endpoints in it. And $e'$ also bounds a disk $D^\prime$ in $\hat P$. By repeating the above process, we finally get an innermost 1-sided disk face, which contradicts Lemma \ref{1-sided}. Thus each vertex of $\Gamma_P$ is incident to boundary edges.

Let $\overline{\Gamma}_P$ be the reduced graph of $\Gamma_P$. By Lemma \ref{valency}, there is a vertex $v$ of valency at most 3 in $\overline{\Gamma}_P$. Then there are at most three families of parallel edges incident to $v$ in $\Gamma_P$. Since $\Delta\ge 10$, one of them has more than $3q$ edges, contradicting with Lemma \ref{3q}. Thus $\Delta\le 8$.
We complete the proof of Theorem \ref{main}.

\bibliographystyle{amsplain}

\end{document}